\documentclass[10pt]{amsart}
\usepackage{amssymb}
\usepackage{epsfig}
\usepackage{url}
\usepackage{setspace}
\theoremstyle{plain}

\newtheorem{thm}{Theorem}[section]
\newtheorem{cor}[thm]{Corollary}
\newtheorem{lem}[thm]{Lemma}
\newtheorem{prop}[thm]{Proposition}

\newtheorem{conj}[thm]{Conjecture}

\newtheorem{prob}[thm]{Problem}
\def\cal{\mathcal}
\def\bbb{\mathbb}
\def\op{\operatorname}
\renewcommand{\phi}{\varphi}

\newcommand{\N}{\bbb{N}}
\newcommand{\Z}{\bbb{Z}}
\newcommand{\Q}{\bbb{Q}}

 \begin{document}

\title[Arithmetic properties of Stern polynomials]{Arithmetic properties of the sequence of degrees of Stern polynomials and related results}
\author{Maciej Ulas}

\keywords{Stern diatomic sequence, Stern polynomials}

\begin{abstract}
Let $B_{n}(t)$ be a $n$-th Stern polynomial and let
$e(n)=\op{deg}B_{n}(t)$ be its degree. In this note we continue our
study started in \cite{Ul} of the arithmetic properties of the
sequence of Stern polynomials and the sequence
$\{e(n)\}_{n=1}^{\infty}$. We also study the sequence
$d(n)=\op{ord}_{t=0}B_{n}(t)$. Among other things we prove that
$d(n)=\nu(n)$, where $\nu(n)$ is the maximal power of 2 which
dividies the number $n$. We also count the number of the solutions
of the equations $e(m)=i$ and $e(m)-d(m)=i$ in the interval
$[1,2^{n}]$. We also obtain an interesting closed expression for a
certain sum involving Stern polynomials.
\end{abstract}

\maketitle

\section{Introduction}\label{section1}

The {\it Stern sequence} (or {\it Stern's diatomic sequence}) $s(n)$
was introduced in \cite{Ste} and is defined recursively in the
following way
\begin{equation*}
s(0)=0,\quad s(1)=1,\quad s(n)=
\begin{cases}
\begin{array}{lll}
  s(\frac{n}{2})                     & \mbox{if}& n\equiv0\pmod{2},  \\
  s(\frac{n-1}{2})+s(\frac{n+1}{2}) &  \mbox{if}& n\equiv1\pmod{2}.
\end{array}
\end{cases}
\end{equation*}
This sequence appears in different mathematical contexts and was an
object of study of many mathematicians like Lehmer \cite{Leh},
Reznick \cite{Rez} and De Rham \cite{Rham}. A comprehensive survey
of its properties can be found in \cite{Urb}. An interesting survey
of known results and applications of the Stern sequence can also be
found in \cite{Nor}.

In a recent paper \cite{Kla} Klav\v{z}ar, Milutinovi\'{c} and Petr
introduced an interesting polynomial analogue of $s(n)$. More
precisely, they define the sequence $\{B_{n}(t)\}_{n=0}^{\infty}$ of
Stern polynomials as follows: $B_{0}(t)=0, B_{1}(t)=1$ and for
$n\geq 2$ we have
\begin{equation*}
B_{n}(t)=\begin{cases}
\begin{array}{lll}
  tB_{\frac{n}{2}}(t) & \mbox{if} & n\equiv0\pmod{2}, \\
  B_{\frac{n-1}{2}}(t)+B_{\frac{n+1}{2}}(t) & \mbox{if}  & n\equiv1\pmod{2}.
\end{array}
\end{cases}
\end{equation*}

The equality $s(n)=B_{n}(1)$ justifies the name of the sequence
$B_{n}(t)$. In \cite{Kla} it is shown that the sequence of Stern
polynomials has an interesting connection with some combinatorial
objects. In particular the $i$-th coefficient in $B_{n}(t)$ counts
the number of hyperbinary representations of $n-1$ with exactly $i$
occurrences of 1. Moreover, if $e(n)=\op{deg}B_{n}(t)$, then the
number $e(n)$ is equal to the difference between the length and the
weight of the non-adjacent form of $n$. These two properties show
that the polynomials $B_{n}(t)$ are an interesting object of study.
We investigated these polynomials and associated sequence $e(n)$ in
a recent paper \cite{Ul}. In this note we continue our study. We
give now a short introduction about the content of the paper.

In Section \ref{section2} we introduce the sequence
\begin{equation*}
d(n)=\op{ord}_{t=0}B_{n}(t),
\end{equation*}
and study its relations with the sequence $\{e(n)\}_{n=1}^{\infty}$.
We show that $d(n)$ is just $\mu(n)$, the maximal power of 2 which
divides $n$. Among other things we also count the number $e(i,n)$ of
solutions of the equation $e(m)-d(m)=i$ in the interval $[1,2^{n}]$.

In Section \ref{section3} we study the sum
\begin{equation*}
S_{k}(n)=\sum_{i:\;e(i)=n}i^{k},
\end{equation*}
where $k, n$ are given. In particular we give recurrence relations
satisfied by the sequence
$G_{k}(x)=\sum_{n=0}^{\infty}S_{k}(n)x^{n}$, $k=0,1,2,\ldots$.

In Section \ref{section4} we give an alternative definition of the
polynomial $B_{n}(t)$ as a determinant of a certain matrix.

In Section \ref{section5} we give a generalization of a certain sum
given in Urbiha's paper \cite{Urb}. Finally, in the last section we
give some additional results on the sequence
$\{e(n)\}_{n=1}^{\infty}$ and state some open problems and
conjectures which appear during our investigations and which we were
unable to prove.

\section{Relations between $d(n)$ and $e(n)$}\label{section2}

We define the following sequences
\begin{equation*}
d(n)=\op{ord}_{t=0}B_{n}(t),\quad e(n)=\op{deg}B_{n}(t).
\end{equation*}

The sequence $e(n), n=1,2,\ldots$ was introduced in the paper
\cite{Kla}. We have $e(1)=0$, $e(2)=1$ and for $n\geq 3$:
\begin{equation*}
e(2n)=e(n)+1,\quad e(2n+1)=\op{max}\{e(n),\;e(n+1)\}
\end{equation*}
An alternative recurrence relation which is more convenient was
obtained in \cite[Corollary 13]{Kla} and has the form
\begin{equation*}
e(2n)=e(n)+1,\quad e(4n+1)=e(n)+1,\quad e(4n+3)=e(n+1)+1.
\end{equation*}
We will use it several times in the sequel. Additional arithmetic
properties which will be useful in our investigations were obtained
in \cite{Ul}. Because we will it use several times we recall it
without proof.

\begin{thm}[Theorem 4.3 in \cite{Ul}]\label{citedthm}
 We have the following equalities:
\begin{equation*}
\begin{array}{lcl}
  m(n) & = & \op{min}\{e(i):\;i\in
[2^{n-1},2^{n}]\}=\left\lfloor\frac{n}{2}\right\rfloor,\;n\geq 2, \\
  M(n) & = & \op{max}\{e(i):\;i\in [2^{n-1},2^{n}]\}=n.
  \end{array}
  \end{equation*}
Moreover,
\begin{equation*}
\begin{array}{lcl}
  \op{mdeg}(n) & = & \op{min}\{i:\;e(i)=n\}=2^{n}, \\
  \op{Mdeg}(n) & = & \op{max}\{i:\;e(i)=n\}=\frac{4^{n+1}-1}{3}.
\end{array}
\end{equation*}
\end{thm}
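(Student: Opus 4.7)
The plan is to establish all four equalities by (strong) induction, exploiting the three-branch recurrence $e(2m)=e(m)+1$, $e(4m+1)=e(m)+1$, $e(4m+3)=e(m+1)+1$ recalled just above. I would begin with $M(n)=n$ and $\op{mdeg}(n)=2^{n}$, as they are the simplest and feed into the rest. Strong induction on $i$ yields simultaneously $e(2^{n})=n$ and $e(i)<n$ whenever $i<2^{n}$: in the branches $i=2j$ and $i=4j+1$ this is immediate because $j<2^{n-1}$ and $j<2^{n-2}$, while in the branch $i=4j+3$ one has $j+1\le 2^{n-2}$, with the extremal case handled by the inductively known value $e(2^{n-2})=n-2$. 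Together these deliver $M(n)=n$ and $\op{mdeg}(n)=2^{n}$ at once.

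For $m(n)=\lfloor n/2\rfloor$ I would induct on $n$. For the upper bound one exhibits an explicit witness: writing $a_{k}:=(4^{k+1}-1)/3$, for odd $n=2k+1$ take $i=a_{k}$, and for even $n=2k$ take $i=2a_{k-1}$. Each such witness is built by iterating the rules $m\mapsto 4m+1$ and $m\mapsto 2m$ starting from $1$; each application of either rule increments $e$ by $1$, and a direct inequality confirms the witness lies in $[2^{n-1},2^{n}]$. For the lower bound, splitting on $i\bmod 4$ sends the pre-image into some $[2^{n'-1},2^{n'}]$ with $n'\in\{n-1,n-2\}$, and the inductive hypothesis, plus the $+1$ from the recurrence, yields $e(i)\ge\lfloor n/2\rfloor$.

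The step I expect to be the main obstacle is $\op{Mdeg}(n)=a_{n}$. Since $a_{n}=4a_{n-1}+1\equiv 1\pmod 4$, the recurrence immediately gives $e(a_{n})=e(a_{n-1})+1=n$ by induction, so $\op{Mdeg}(n)\ge a_{n}$. For the reverse one must show $e(i)>n$ for every $i>a_{n}$. When $i>2^{2n+1}$ this is already implied by $m(2n+2)=n+1$ from the previous step. For $a_{n}<i\le 2^{2n+1}$, another split on $i\bmod 4$ places the relevant pre-image into one of three windows: the case $i=2j$ (available because $a_{n}$ is odd) forces $j\in(2^{2n-1},2^{2n}]$, where $m(2n)=n$ gives $e(j)\ge n$; the case $i=4j+1$ forces $j>a_{n-1}$; and the case $i=4j+3$ is analogous via the identity $(a_{n}+1)/4=a_{n-1}+1/2$, which forces $j+1>a_{n-1}$. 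In the last two cases, the induction hypothesis $\op{Mdeg}(k)=a_{k}$ for every $k<n$, combined with the monotonicity $a_{0}<a_{1}<\cdots$, rules out $e(\cdot)=k$ for all $k\le n-1$, forcing $e(\cdot)\ge n$ and hence $e(i)\ge n+1$. The delicate point is this book-keeping of strict inequalities at the boundaries of the pre-image windows; once the identity $(a_{n}+1)/4=a_{n-1}+1/2$ and the oddness of $a_{n}$ are in hand, the three branches line up in parallel.
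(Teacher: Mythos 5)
The paper does not actually prove this statement: it is imported verbatim from \cite{Ul} (Theorem 4.3 there), and the text explicitly says it is recalled ``without proof.'' So there is no in-paper argument to compare yours against, and your proposal has to stand on its own as a self-contained derivation from the recurrence $e(2m)=e(m)+1$, $e(4m+1)=e(m)+1$, $e(4m+3)=e(m+1)+1$.

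On its own terms the proposal is correct, and the architecture is the natural one: the simultaneous induction giving $e(2^n)=n$ together with $e(i)\le\lfloor\log_2 i\rfloor$ settles $M(n)$ and $\op{mdeg}(n)$ in one stroke; the witnesses $a_k=(4^{k+1}-1)/3$ for odd $n$ and $2a_{k-1}$ for even $n$ do lie in the right dyadic windows and realize $\lfloor n/2\rfloor$; and the lower bound for $m(n)$ via the residue of $i$ modulo $4$, which drops the preimage into the window indexed by $n-1$ or $n-2$, closes the induction since $\lfloor(n-1)/2\rfloor+1\ge\lfloor n/2\rfloor$ and $\lfloor(n-2)/2\rfloor+1=\lfloor n/2\rfloor$. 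For $\op{Mdeg}$, the three-way split is also sound: for $i=2j$ with $a_n<i\le 2^{2n+1}$ one indeed gets $j\in(2^{2n-1},2^{2n}]$ because $a_n>2^{2n}$, so $m(2n)=n$ applies; for $i=4j+1$ and $i=4j+3$ the inequalities $j>a_{n-1}$ and $j+1>a_{n-1}$ combined with $\op{Mdeg}(k)=a_k$ for $k<n$ and the monotonicity of $(a_k)$ exclude $e(\cdot)\le n-1$, hence force $e(i)\ge n+1$. The only things a full write-up must still nail down are the base cases ($n=2,3$ for $m(n)$, $n=0$ for $\op{Mdeg}$, where one checks directly that $e(i)\ge1$ for $i\ge2$) and the integer rounding at the window boundaries, which you already flag; none of these threatens the argument.
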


\begin{thm}
We have the equality $d(n)=\nu(n)$, where
$\nu(n)=\op{max}\{k:\;2^{k}\;\mbox{divide}\;n\}$.
\end{thm}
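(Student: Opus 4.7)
The plan is a straightforward strong induction on $n$, using the defining recurrence for $B_n(t)$. The only subtle point is to verify that no unwanted cancellation of constant terms can occur in the odd case, and for this I would first record the (easy) fact that all $B_n(t)$ have nonnegative integer coefficients.

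First I would dispose of the base case $n=1$: here $B_1(t)=1$, so $d(1)=0=\nu(1)$. Then for the inductive step I would split on parity. If $n=2m$ is even, the recurrence gives $B_{2m}(t)=tB_m(t)$, hence $d(2m)=d(m)+1$; by the inductive hypothesis $d(m)=\nu(m)$, and since $\nu(2m)=\nu(m)+1$, we get $d(2m)=\nu(2m)$.

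The odd case is where the small bit of care is needed. If $n$ is odd then $\nu(n)=0$, so the goal is to show $B_n(0)\neq 0$. Write $B_n(t)=B_{(n-1)/2}(t)+B_{(n+1)/2}(t)$. The two indices $(n-1)/2$ and $(n+1)/2$ are consecutive integers, so exactly one of them, call it $k$, is odd and the other, $k'$, is even. By the inductive hypothesis $d(k)=\nu(k)=0$ and $d(k')=\nu(k')\geq 1$, so $B_{k'}(0)=0$ while $B_k(0)$ is a positive integer (this is where nonnegativity of the coefficients, proved by a trivial parallel induction on the same recurrence, keeps the two constant terms from combining to give zero). Therefore $B_n(0)=B_k(0)>0$ and $d(n)=0=\nu(n)$.

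The main (minor) obstacle is simply the observation that in the odd case $B_n(t)$ splits into an even-index and an odd-index summand; once that is noticed, the positivity of coefficients immediately rules out cancellation and the induction goes through cleanly.
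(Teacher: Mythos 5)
Your proof is correct. The even case is handled identically to the paper's, but your treatment of the odd case takes a noticeably more direct route. The paper first extracts the recurrence $d(2n)=d(n)+1$, $d(2n+1)=\min\{d(n),d(n+1)\}$ from the definition of $B_n(t)$, then runs a separate mod-$4$ reduction showing $d(4n+1)=d(2n+1)$ and $d(4n+3)=d(2n+1)$, and finally inducts downward from $d(1)=0$ to conclude that $d$ vanishes on all odd arguments, identifying $d$ with $\nu$ by matching recurrences. You instead fold everything into a single strong induction: for odd $n$ you split $B_n=B_{(n-1)/2}+B_{(n+1)/2}$, note that of the two consecutive indices exactly one is even (so its summand vanishes at $t=0$ by the inductive hypothesis) and one is odd (so its summand has nonzero constant term), whence $B_n(0)\neq 0$ with no possibility of cancellation. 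This avoids the mod-$4$ case analysis entirely. One small remark: your appeal to nonnegativity of the coefficients is not actually needed at this point --- since one of the two constant terms is exactly $0$ and the other is nonzero, their sum is automatically nonzero regardless of signs --- though recording nonnegativity costs nothing and does no harm. Both arguments rest on the same underlying fact (of two consecutive integers one is odd), so the proofs are close in spirit; yours is the leaner packaging, while the paper's version has the side benefit of explicitly exhibiting the recurrence $d(2n+1)=\min\{d(n),d(n+1)\}$, which it reuses later in the paper (e.g.\ in the computation of $c(i,n)$).
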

\begin{proof}
First of all let us note that $d(1)=0$, $d(2)=1$, $d(3)=0$ and
$d(4)=2$. We show that the sequence $d(n)$ satisfies the following
relations $d(2n)=d(n)+1$ and $d(2n+1)=0$. These relations clearly
hold for $n\leq 4$. From the definition of the sequence $d(n)$ as
$\op{ord}_{t=0}B_{n}(t)$ we deduce the following relations
\begin{equation*}
d(2n)=d(n)+1,\quad\quad d(2n+1)=\op{min}\{d(n),d(n+1)\}.
\end{equation*}
Now let us note that
\begin{equation*}
d(4n+1)=\op{min}\{d(2n),d(2n+1)\}=\op{min}\{d(n)+1,\op{min}\{d(n),d(n+1)\}\}
\end{equation*}
which shows that $d(4n+1)=\op{min}\{d(n),d(n+1)\}=d(2n+1)$.
Similarly, we get that
\begin{equation*}
d(4n+3)=\op{min}\{d(2n+1),d(2n+2)\}=\op{min}\{\op{min}\{d(n),d(n+1)\},d(n+1)+1\}
\end{equation*}
and thus we get the equality $d(4n+3)=d(2n+1)$. Because $d(1)=0$ by
induction on $n$ we get that $d(2n+1)=0$ for all $n$. This
conclusion finishes the proof of our theorem due to the fact that
the sequence $\nu(n)$ satisfies exactly the same recurrence relation
as $d(n)$ and we have equality $d(n)=\nu(n)$ for $n=1,2,3,4$. Thus
we deduce that $d(n)=\nu(n)$ for all $n$.
\end{proof}

It is clear that we have an inequality $d(n)\leq e(n)$ for all $n$
and thus we can define the map
\begin{equation*}
\Phi:\;\N_{+}\ni n\mapsto (d(n),e(n))\in \{(a,b)\in
\N\times\N:\;a\leq b\}.
\end{equation*}

We have the following.

\begin{prop}
The map $\Phi$ is onto.
\end{prop}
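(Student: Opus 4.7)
The plan is to reduce the surjectivity of $\Phi$ to a one-variable existence problem about $e$ on odd integers, and then to produce an explicit odd integer achieving any prescribed value.

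First I would separate the two coordinates. By the previous theorem, $d(n)=\nu(n)$, so if I write $n=2^{a}m$ with $m$ odd, then $d(n)=a$ automatically. On the other hand, the recurrence $e(2n)=e(n)+1$ iterated $a$ times gives $e(2^{a}m)=e(m)+a$. So the problem reduces to: for each $v\geq 0$, find an \emph{odd} positive integer $m$ with $e(m)=v$. Given such $m_v$, the element $n=2^{a}m_{b-a}$ will satisfy $\Phi(n)=(a,b)$ for any $0\le a\le b$.

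To build $m_v$, I would offer an explicit formula. The cleanest candidate is $m_v=2^{v+1}-1$, which is obviously odd. That $e(2^{v+1}-1)=v$ can be proved by a short induction on $v$ using the relation $e(4n+3)=e(n+1)+1$ from \cite[Corollary 13]{Kla}: for $v\geq 1$ one has $2^{v+1}-1 = 4(2^{v-1}-1)+3$, whence $e(2^{v+1}-1)=e(2^{v-1})+1=(v-1)+1=v$, using also $e(2^{v-1})=v-1$ that follows from $e(2n)=e(n)+1$. Alternatively, Theorem \ref{citedthm} already supplies the odd integer $\op{Mdeg}(v)=(4^{v+1}-1)/3=1+4+\cdots+4^{v}$, which is manifestly odd and satisfies $e(\op{Mdeg}(v))=v$ by definition.

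Putting the pieces together, for any pair $(a,b)$ with $0\le a\le b$ one takes
\begin{equation*}
n=2^{a}\bigl(2^{b-a+1}-1\bigr),
\end{equation*}
and checks that $d(n)=\nu(n)=a$ (because the odd factor is odd) and $e(n)=e(2^{b-a+1}-1)+a=(b-a)+a=b$, so $\Phi(n)=(a,b)$, proving surjectivity. There is no real obstacle here; the only mildly nontrivial step is the one-line induction establishing $e(2^{v+1}-1)=v$, which we can even avoid by citing $\op{Mdeg}(v)$ from Theorem \ref{citedthm}.
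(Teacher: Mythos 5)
Your proof is correct and follows essentially the same route as the paper: factor out the power of two via $d(n)=\nu(n)$ and $e(2n)=e(n)+1$, then exhibit an odd integer with any prescribed $e$-value by a one-line induction using one of the recurrences from \cite[Corollary 13]{Kla}. The only difference is the choice of odd witness ($2^{v+1}-1$ via $e(4n+3)=e(n+1)+1$, rather than the paper's $2^{v+1}+1$ via $e(4n+1)=e(n)+1$), which has the small advantage of covering the case $a=b$ uniformly instead of treating it separately.
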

\begin{proof}
This is very simple. We show that for any pair of nonnegative
integers $(p, q)$ with $p\leq q$ there exists a natural number $n$
such that $\nu(n)=p$ and $e(n)=q$. If $p=q$ it is enough to take
$n=2^{p}$. We can assume that $p<q$. In order to prove the demanded
property we take $n=2^{p}(2^{q-p+1}+1)$. We clearly have $\nu(n)=p$.
In order to finish the proof it is enough to show that
$e(2^{m+1}+1)=m$ for positive $m$. We prove this by induction on
$m$. The equality is clearly true for $m=1$ because $e(5)=1$. Let us
suppose that the desired equality is true for each $k\leq m$. We
compute
\begin{equation*}
e(2^{m+2}+1)=e(4\cdot2^{m}+1)=e(2^{m})+1=m+1,
\end{equation*}
and get the desired result. This observation finishes the proof of
the surjectivity of the map $\Phi$.
\end{proof}
\bigskip

It is an interesting question what can be said about the solutions
of the equation $e(n)-d(n)=i$, where $i\in\N$ is given. More
precisely we are interested in the problem of counting the number of
elements of the set $C(i,n),$ where
\begin{equation*}
C(i,n)=\{m\in [1,2^{n}]:\;e(m)-d(m)=i\}.
\end{equation*}
We define $c(i,n)=|C(i,n)|$ and note that $c(i,n)$ exists for $i\leq
n$ which follows from the properties of the sequence $e(n)$
presented in Theorem \ref{citedthm}. Now let us note that
\begin{align*}
c(i,n+1)&=|\{m\in [1,2^{n+1}]:\;e(m)-d(m)=i\}|\\
        &=|\{m\in [1,2^{n}]:\;e(2m)-d(2m)=i\}|\\
        &\quad+|\{m\in [0,2^{n}-1]:\;e(2m+1)-d(2m+1)=i\}|\\
        &=c(i-1,n)+|\{m\in [0,2^{n}-1]:\;e(2m+1)=i\}|,
\end{align*}
where in the last equality we use the fact that $d(2m+1)=\nu(2m+1)=0$.

We thus see that in order to compute the $c(i,n)$ we need to know
the value of $|\{m\in [0,2^{n}-1]:\;e(2m+1)=i\}|$. In order to do
this we will need some properties of the polynomial
\begin{equation*}
H_{n}(x)=\sum_{i=1}^{2^{n}}x^{e(i)}=\sum_{i=0}^{n}e(i,n)x^{i},
\end{equation*}
where the equality $\op{deg}H_{n}=n$ follows from the properties of
the sequence $e(n)$, and the number $e(i,n)$ is the cardinality of
the set $\{m\in [1,2^{n}]:\;e(m)=i\}$.

\begin{lem}
We have $H_{0}(x)=1$, $H_{1}(x)=x+1$ and for $n\geq 2$ we get that
$H_{n}(x)$ satisfies the recurrence relation
\begin{equation*}
H_{n+2}(x)=xH_{n+1}(x)+2xH_{n}(x)-x^{n+1}+1.
\end{equation*}
Moreover, we have the equality $e(0,n)=1$ and for $1\leq i\leq n$ we
have
\begin{equation*}
e(i,n+2)=e(i-1,n+1)+2e(i-1,n)-[i=n+1],
\end{equation*}
where as usual $[A]$ is equal to {\rm 1} if $A$ is true and {\rm 0}
otherwise.
\end{lem}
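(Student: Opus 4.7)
The plan is to establish the polynomial identity first, then obtain the coefficient recurrence by reading off $[x^i]$ from both sides. The central idea is to partition the indexing set $[1,2^{n+2}]$ in $H_{n+2}(x)$ according to residues modulo $4$ and apply the alternative recurrence for $e(n)$ from \cite[Corollary 13]{Kla} that is recalled in the introduction to Section \ref{section2}.

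First I would verify the base cases directly: $H_0(x) = x^{e(1)} = 1$ and $H_1(x) = x^{e(1)} + x^{e(2)} = 1 + x$. For the recurrence, I would split
\begin{equation*}
H_{n+2}(x) = \sum_{m=1}^{2^{n+1}} x^{e(2m)} + \sum_{m=0}^{2^{n}-1} x^{e(4m+1)} + \sum_{m=0}^{2^{n}-1} x^{e(4m+3)}
\end{equation*}
and evaluate each piece. The even part gives $\sum_{m=1}^{2^{n+1}} x^{e(m)+1} = xH_{n+1}(x)$ using $e(2m) = e(m)+1$. For the third part, using $e(4m+3) = e(m+1)+1$ and reindexing $k = m+1$ yields $x\sum_{k=1}^{2^n} x^{e(k)} = xH_n(x)$.

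The main subtle point is the middle sum: the recurrence $e(4m+1) = e(m)+1$ is valid only for $m \geq 1$, whereas the term $m = 0$ corresponds to $e(1) = 0$, contributing $x^0 = 1$. Isolating this boundary term gives
\begin{equation*}
\sum_{m=0}^{2^{n}-1} x^{e(4m+1)} = 1 + x\sum_{m=1}^{2^{n}-1} x^{e(m)} = 1 + x\bigl(H_n(x) - x^{e(2^{n})}\bigr) = 1 + xH_n(x) - x^{n+1},
\end{equation*}
where the last equality uses the value $e(2^{n}) = n$, which follows from the formula $\op{mdeg}(n) = 2^{n}$ in Theorem \ref{citedthm}. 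Adding the three contributions yields exactly $xH_{n+1}(x) + 2xH_n(x) - x^{n+1} + 1$. The only real obstacle is this careful bookkeeping of the boundary term; once that $-x^{n+1}$ correction is identified and justified via Theorem \ref{citedthm}, the rest is routine manipulation.

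Finally, the coefficient version is obtained by comparing coefficients of $x^i$. Since $\op{deg} H_{n+2}(x) = n+2$, the constant term on the right is $1$, so $e(0,n+2) = 1$, and more generally $e(0,n) = 1$ follows from $e(1) = 0$. For $i \geq 1$, reading $[x^i]$ from $H_{n+2}(x) = xH_{n+1}(x) + 2xH_n(x) - x^{n+1} + 1$ gives
\begin{equation*}
e(i,n+2) = e(i-1,n+1) + 2e(i-1,n) - [i = n+1],
\end{equation*}
which completes the proof.
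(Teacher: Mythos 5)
Your proof is correct and follows essentially the same route as the paper: split $[1,2^{n+2}]$ by residue mod $4$, apply the alternative recurrence $e(2n)=e(n)+1$, $e(4n+1)=e(n)+1$, $e(4n+3)=e(n+1)+1$, and then compare coefficients. The only (immaterial) difference is in the bookkeeping of the $-x^{n+1}$ term: the paper extends the $4i+1$ sum to $i=2^{n}$ and subtracts $x^{e(2^{n+2}+1)}$, while you truncate at $i=2^{n}-1$ and remove $x^{e(2^{n})}$ from $H_{n}(x)$ --- both reduce to $e(2^{n})=n$.
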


\begin{proof}
We clearly have that $H_{0}(x)=1$ and $H_{1}(x)=x+1$. Let us assume
that $n\geq 2$. Then we have the following chain of equalities
\begin{align*}
H_{n+2}(x)&=\sum_{i=1}^{2^{n+2}}x^{e(i)}=\sum_{i=1}^{2^{n+1}}x^{e(2i)}+1+\sum_{i=1}^{2^{n}}x^{e(4i+1)}-x^{e(2^{n+2}+1)}+\sum_{i=0}^{2^{n}-1}x^{e(4i+3)}\\
          &=xH_{n+1}(x)+1+\sum_{i=1}^{2^{n}}x^{e(i)+1}-x^{n+1}+\sum_{i=0}^{2^{n}-1}x^{e(i+1)+1}\\
          &=xH_{n+1}(x)+1+2xH_{n}(x)-x^{n+1}.
\end{align*}
This proves the first part of our proposition. Comparing now the
coefficients on both sides of the obtained equality we get the
second part of the proposition.
\end{proof}

As an immediate consequence of the above lemma we get the following.

\begin{cor}
We have $\sum_{i=0}^{2^{n}-1}x^{e(2i+1)}=H_{n+1}(x)-xH_{n}(x)$ and
for $i,n\in\N$ with $i\leq n$ we get
\begin{equation*}
|\{m\in [0,2^{n}-1]:\;e(2m+1)=i\}|=e(i,n+1)-e(i-1,n).
\end{equation*}
\end{cor}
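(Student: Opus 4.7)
The plan is to derive both statements by a single parity-split of the defining sum for $H_{n+1}(x)$. I would start from $H_{n+1}(x) = \sum_{i=1}^{2^{n+1}} x^{e(i)}$ and split the range according to whether $i$ is even or odd. Writing even indices as $i = 2j$ with $1 \le j \le 2^{n}$ and odd indices as $i = 2j+1$ with $0 \le j \le 2^{n}-1$, the two ranges exactly partition $\{1,\dots,2^{n+1}\}$, giving
\begin{equation*}
H_{n+1}(x) \;=\; \sum_{j=1}^{2^{n}} x^{e(2j)} \;+\; \sum_{j=0}^{2^{n}-1} x^{e(2j+1)}.
\end{equation*}
Using the recurrence $e(2j) = e(j)+1$ recalled at the start of Section \ref{section2}, the first sum equals $x \sum_{j=1}^{2^{n}} x^{e(j)} = x H_{n}(x)$. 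Solving for the second sum yields the first identity of the corollary.

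For the second identity I would simply compare coefficients of $x^{i}$ on both sides of the first. On the left, the coefficient of $x^{i}$ in $\sum_{j=0}^{2^{n}-1} x^{e(2j+1)}$ is, by definition, $|\{m \in [0, 2^{n}-1] : e(2m+1)=i\}|$. On the right, the coefficient of $x^{i}$ in $H_{n+1}(x) - xH_{n}(x)$ is $e(i,n+1) - e(i-1,n)$ by the definition of the numbers $e(i,n)$ as the coefficients of $H_{n}(x)$. There is no real obstacle here: the only point requiring a little attention is the bookkeeping of index ranges so that the even and odd parts exhaust $\{1,\dots,2^{n+1}\}$ without overlap, and once that is set up the argument reduces to a single application of $e(2j)=e(j)+1$. (Alternatively, one could read the identity off directly from the telescoping decomposition of $H_{n+2}(x)$ that already appears inside the proof of the preceding lemma.)
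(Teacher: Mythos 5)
Your proof is correct and matches what the paper intends: the paper states the corollary as an immediate consequence of the lemma, and the underlying argument is exactly your parity split of $H_{n+1}(x)=\sum_{i=1}^{2^{n+1}}x^{e(i)}$ combined with $e(2j)=e(j)+1$, followed by comparison of coefficients. The index bookkeeping you flag is handled correctly, so nothing is missing.
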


From the above corollary we immediately deduce the recurrence
relation for $c(i,n)$. We have
$c(i,n)=c(i-1,n-1)+e(i,n)-e(i-1,n-1)$, which can be rewritten as
\begin{align*}
c(i,n)&-e(i,n)=c(i-1,n-1)-e(i-1,n-1)\\
      &=c(i-2,n-2)-e(i-2,n-2)=\ldots=c(0,n-i)-e(0,n-i)=0.
\end{align*}
We thus deduce that $c(i,n)=e(i,n)$ and we left with the problem of
computation of the coefficients of the polynomial $H_{n}(x)$. In
order to do this we start with the following.

\begin{lem}\label{genforH(n)}
Let $n\geq 0$ and consider the polynomial
$H_{n}(x)=\sum_{i=1}^{2^{n}}x^{e(i)}$. Then, we have an identity
\begin{equation*}
\cal{E}(x,y)=\sum_{n=0}^{\infty}H_{n}(x)y^{n}=\frac{1 -
xy(1+y-y^2)}{(1-y)(1-xy)(1-xy-2xy^2)}.
\end{equation*}
In particular we have
\begin{equation*}
H_{n}(x)=\frac{1}{1-3x}+\frac{1}{2}x^{n}+h_{n}(x),
\end{equation*}
where
\begin{equation*}
h_{n}(x)=\frac{(\sqrt{2x}i)^{n}}{2(1-3x)}\left(8T_{n}\left(-\frac{\sqrt{2x}i}{4}\right)-3(x+3)U_{n}\left(-\frac{\sqrt{2x}i}{4}\right)\right)
\end{equation*}
and $T_{n}(x)$ (respectively $U_{n}(x)$) is the Chebyshev polynomial
of the first kind (respectively of the second kind) and $i^2=-1$.
\end{lem}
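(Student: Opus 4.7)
The plan is to use the recurrence $H_{n+2}(x)=xH_{n+1}(x)+2xH_{n}(x)-x^{n+1}+1$ with $H_{0}(x)=1$ and $H_{1}(x)=x+1$ established in the preceding lemma, and convert it into a closed formula for $\cal{E}(x,y)$ via standard generating-function manipulations. Specifically, I would multiply the recurrence by $y^{n+2}$, sum over $n\geq 0$, and use the identifications
\begin{equation*}
\sum_{n\geq 0}H_{n+2}(x)y^{n+2}=\cal{E}(x,y)-1-(x+1)y,\quad \sum_{n\geq 0}y^{n+2}=\frac{y^{2}}{1-y},\quad \sum_{n\geq 0}x^{n+1}y^{n+2}=\frac{xy^{2}}{1-xy}.
\end{equation*}
This yields an equation of the form $\cal{E}(x,y)(1-xy-2xy^{2})=R(x,y)$ for an explicit rational $R$. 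Putting $R$ over the common denominator $(1-y)(1-xy)$ and simplifying the numerator should collapse it to $1-xy(1+y-y^{2})$, giving the claimed expression for $\cal{E}(x,y)$.

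For the closed form of $H_{n}(x)$, I would perform a partial-fraction decomposition in $y$ over $\Q(x)$,
\begin{equation*}
\cal{E}(x,y)=\frac{A(x)}{1-y}+\frac{B(x)}{1-xy}+\frac{P(x)+Q(x)y}{1-xy-2xy^{2}}.
\end{equation*}
Residue computations at $y=1$ and $y=1/x$ should give $A(x)=1/(1-3x)$ and $B(x)=1/2$, which accounts for the summands $\tfrac{1}{1-3x}+\tfrac{1}{2}x^{n}$ of $H_{n}(x)$. The remaining coefficients $P(x)$ and $Q(x)$ are then determined by subtracting these two simple fractions from $\cal{E}$ and clearing denominators.

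To identify the contribution of the quadratic factor with $h_{n}(x)$, I would substitute $z=i\sqrt{2x}\,y$ and set $\tau=-i\sqrt{2x}/4$, so that $1-xy-2xy^{2}=1-2\tau z+z^{2}$ and $1-xy/2=1-\tau z$. The standard generating identities
\begin{equation*}
\sum_{n\geq 0}U_{n}(\tau)z^{n}=\frac{1}{1-2\tau z+z^{2}},\qquad \sum_{n\geq 0}T_{n}(\tau)z^{n}=\frac{1-\tau z}{1-2\tau z+z^{2}}
\end{equation*}
then express the coefficient of $y^{n}$ in $(P(x)+Q(x)y)/(1-xy-2xy^{2})$ as a $\Q(x)$-linear combination of $(i\sqrt{2x})^{n}T_{n}(\tau)$ and $(i\sqrt{2x})^{n}U_{n}(\tau)$. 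Matching the claimed formula amounts to rewriting the numerator $P(x)+Q(x)y$ in the basis $\{1,\,1-xy/2\}$ and verifying that, after factoring out $1/(2(1-3x))$, the coefficients come out as $-3(x+3)$ in front of $U_{n}$ and $8$ in front of $T_{n}$.

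The main obstacle is computational rather than conceptual: one has to carry out the partial-fraction step carefully enough that the delicate normalisation inside $h_{n}(x)$ emerges correctly, and one has to rewrite the resulting rational numerator $P(x)+Q(x)y$ in the precise basis adapted to the Chebyshev generating functions. Everything else -- the recurrence, the three geometric-series sums, the residue extractions, and the Chebyshev identities -- is entirely standard, so the proof ultimately reduces to a careful rational-function bookkeeping.
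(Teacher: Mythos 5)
Your proposal is correct and follows essentially the same route as the paper: derive the functional equation for $\cal{E}(x,y)$ from the recurrence for $H_{n}(x)$, solve it linearly, split off the $\frac{1}{1-3x}\cdot\frac{1}{1-y}$ and $\frac{1}{2(1-xy)}$ terms by partial fractions, and recognise the remaining term with quadratic denominator via the Chebyshev generating functions under the substitution $z=\sqrt{2x}\,iy$, $\tau=-\sqrt{2x}\,i/4$. The only cosmetic difference is that the paper writes the last partial fraction explicitly as $-\frac{1+3x+4xy}{2(1-3x)(1-xy-2xy^2)}$ rather than with undetermined $P(x),Q(x)$, but the bookkeeping is identical.
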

\begin{proof}
In order to prove the demanded equality we use the recurrence
relation for $H_{n}(x)$. We have
\begin{align*}
\cal{E}(x,y)=1&+(1+x)y+\sum_{n=0}^{\infty}H_{n+2}(x)y^{n+2}=1+(1+x)y+xy\sum_{n=0}^{\infty}H_{n+1}(x)y^{n+1}\\
              &+2xy^2\sum_{n=0}^{\infty}H_{n}(x)y^{n}-y\sum_{n=0}^{\infty}x^{n+1}y^{n+1}+\sum_{n=0}^{\infty}y^{n+2}\\
              &=1+(1+x)y+xy(\cal{E}(x,y)-1)+2xy^2\cal{E}(x,y)-\frac{xy^2}{1-xy}+\frac{y^2}{1-y}.
\end{align*}
Solving now the obtained (linear) equation with respect to
$\cal{E}(x,y)$ we get the expression from the statement of the
corollary. In order to get the expression for $H_{n}(x)$ we use the
standard method of decomposition of rational into simple fractions.
More precisely we have
\begin{equation*}
\cal{E}(x,y)=\frac{1}{1-3x}\cdot\frac{1}{1-y}+\frac{1}{2(1-xy)}-\frac{1+3x+4xy}{2(1-3x)(1-xy-2xy^2)}.
\end{equation*}
Let us denote the last term by $F(x,y)$. We express the $n$-th
coefficient, say $h_{n}(x)$, of the power series expansion of the
function $F(x,y)$ with respect to $y$ as a certain combination of
Chebyshev polynomials of the first and the second kind in the
variable $-\sqrt{2x}i/4$. Let us recall that the $n$-th Chebyshev
polynomial of the first kind is defined by the relation
$T_{n}(x)=\cos (n\arccos x)$. The $n$-th Chebyshev polynomial of the
second kind is defined as $U_{n}(x)=\sin((n+1)\arccos
x)/\sin(\arccos x)$. The generating function $T(x,y)$ (respectively
$U(x,y)$) for the sequence of Chebyshev polynomials of the first
kind (respectively the second kind) is given by
\begin{equation*}
T(x,y)=\frac{1-xy}{1-2xy+y^2},\quad\quad U(x,y)=\frac{1}{1-2xy+y^2}.
\end{equation*}

We express now the function $F(x,y)$ as a combination of the
functions $T(x,y)$ and $U(x,y)$. More precisely we have the
following equality
\begin{equation*}
F(x,y)=\frac{1}{2(1-3x)}\left(8T\left(-\frac{\sqrt{2x}i}{4},\sqrt{2x}iy\right)-3(x+3)U\left(-\frac{\sqrt{2x}i}{4},\sqrt{2x}iy\right)\right).
\end{equation*}
Comparing now the coefficients on the both sides of the above
equality we get the following expression for the rational function
$h_{n}(x)$:
\begin{equation*}
h_{n}(x)=\frac{(\sqrt{2x}i)^{n}}{2(1-3x)}\left(8T_{n}\left(-\frac{\sqrt{2x}i}{4}\right)-3(x+3)U_{n}\left(-\frac{\sqrt{2x}i}{4}\right).
  \right)
\end{equation*}
This is exactly the expression for the function $h_{n}(x)$ displayed
in the statement of the theorem.
\end{proof}

In order to give a closed value of the number $e(i,n)$ we use the
expression for the polynomial $H_{n}(x)$. However, before we do that
let us recall how the coefficients of the polynomials $T_{n}(x)$ and
$U_{n}(x)$ look like:
\begin{align*}
T_{n}(x)&=\frac{n}{2}\sum_{k=0}^{\left[\frac{n}{2}\right]}\frac{(-1)^{k}}{n-k}C(n-k,k)(2x)^{n-2k},\\
U_{n}(x)&=\sum_{k=0}^{\left[\frac{n}{2}\right]}(-1)^{k}C(n-k,k)(2x)^{n-2k},
\end{align*}
where as usual $C(a,b)=\left(\begin{array}{c}
 a\\
  b\\
\end{array}\right)$ is a binomial coefficient. All properties of
the Chebyshev polynomials of both kinds which we have used can be
found in \cite{Riv}.

In view of the above identities for $U_{n}(t)$ and $T_{n}(t)$ we get
the expression for $H_{n}(x)$ in the following form
\begin{align*}
H_{n}(x)&=\frac{1}{2}x^{n}+\frac{1}{1-3x}\left(1+\sum_{k=0}^{\left[\frac{n}{2}\right]}\frac{(5k-3n)2^{k}}{n-k}C(n-k,k)x^{n-k}\right)\\
        &=\frac{1}{2}x^{n}+\frac{1}{1-3x}\left(1+\sum_{i=n-\lfloor\frac{n}{2}\rfloor}^{n}\frac{(2n-5i)2^{n-i}}{i}C(i,n-i)x^{i}\right).
\end{align*}
Let us put $Z_{n}(x)=(1-3x)(H_{n}(x)-\frac{1}{2}x^{n})$. We thus see
that in order to find a formula for $e(i,n)$ it is enough to compute
the coefficients of the polynomial $Z_{n}(x)/(1-3x)$. Now, we recall
that
\begin{equation*}
\sum_{i=0}^{n}a_{i,n}x^{i}=(1-cx)\sum_{i=0}^{n-1}b_{i,n-i}\Longleftrightarrow
b_{i,n-1}=c^{i}\sum_{j=0}^{i}\frac{a_{j,n}}{c^{j}}.
\end{equation*}
In our situation $c=3$ and we have the following expression for the
coefficients of the polynomial $Z_{n}(x)$:
\begin{equation*}
a_{i,n}=\begin{cases}
\begin{array}{lcl}
  1 & \mbox{for} & i=0, \\
  0 & \mbox{for} & 1\leq i< n-\lfloor\frac{n}{2}\rfloor,\\
  \frac{(2n-5i)2^{n-i}}{i}C(i,n-i) & \mbox{for} & n-\lfloor\frac{n}{2}\rfloor\leq i\leq
  n.
\end{array}
\end{cases}
\end{equation*}
Computing now the expression for $b_{i,n-1}$ using the above
equality and performing all necessary simplifications we get the
following result.

\begin{thm}
Let $e(i,n)=|\{m\in [1,2^{n}]:\;e(m)=i\}|$. Then we have
$H_{n}(x)=\sum_{i=1}^{n}e(i,n)x^{i}$, where
\begin{equation*}
e(i,n)=\begin{cases}
\begin{array}{lcl}
  3^{i} & \mbox{for} & 0\leq i \leq n-\lfloor\frac{n}{2}\rfloor-1, \\
  \frac{2^{n}C(i,n-i)}{2^{i+1}}+3^{i}\left(1+2^{n}\sum_{j=n-\lfloor\frac{n}{2}\rfloor}^{i}\frac{2n-5j}{j6^{j}}C(j,n-j)\right) & \mbox{for} & n-\lfloor\frac{n}{2}\rfloor\leq i\leq n-1,  \\
  1 & \mbox{for} & i=n.
\end{array}
\end{cases}
\end{equation*}
\end{thm}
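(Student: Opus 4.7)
The plan is to carry out the coefficient extraction already set up in the discussion preceding the theorem. From the representation $H_n(x) = \frac{1}{2}x^n + Z_n(x)/(1-3x)$ with an explicit polynomial $Z_n(x) = \sum_j a_{j,n} x^j$ whose coefficients have just been tabulated, the coefficient of $x^i$ in $Z_n(x)/(1-3x)$ is extracted via $b_{i,n-1} = 3^i \sum_{j=0}^i a_{j,n}/3^j$, and one recovers $e(i,n) = [x^i] H_n(x)$ by combining with the $\frac{1}{2}x^n$ contribution (which only affects the case $i = n$).

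The three ranges of $i$ in the statement correspond directly to which of the $a_{j,n}$'s enter the partial sum. In the low regime $0 \leq i < n - \lfloor n/2 \rfloor$ only $a_{0,n} = 1$ is active, giving $e(i,n) = 3^i$ immediately. In the middle regime $n - \lfloor n/2 \rfloor \leq i \leq n-1$ one substitutes $a_{j,n} = \frac{(2n-5j)\,2^{n-j}}{j}\,C(j,n-j)$ for the remaining $j$; pulling the factor $3^i$ out of the sum and rewriting $2^{n-j}/3^j$ as $2^n/6^j$ produces the main piece $3^i\bigl(1 + 2^n \sum_j \frac{2n-5j}{j \cdot 6^j}\,C(j,n-j)\bigr)$, while the residual binomial summand $\frac{2^n C(i,n-i)}{2^{i+1}}$ emerges from the careful handling of the $j = i$ boundary contribution via the identity $\frac{4n-9i}{i} = 1 + \frac{2(2n-5i)}{i}$. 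In the top case $i = n$, Theorem~\ref{citedthm} identifies $2^n$ as the unique element of $[1, 2^n]$ with $e(m) = n$, so $e(n,n) = 1$ follows at once without further computation.

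The main obstacle is the algebraic regrouping in the middle range. Concretely, the $j = i$ term of the raw partial sum must be split into two pieces: one that legitimately extends the bulk summation (contributing the $j = i$ summand of $3^i \cdot 2^n \sum \frac{2n-5j}{j \cdot 6^j} C(j,n-j)$), and a leftover $2^{n-i-1} C(i,n-i) = \frac{2^n C(i,n-i)}{2^{i+1}}$ that is displayed separately as the binomial correction in the theorem. After performing this decomposition, tidying up the indices, and cancelling common powers of $2$ and $3$, the three cases fuse into the stated closed expression, and no further combinatorial input beyond Theorem~\ref{citedthm} and the recurrence for $H_n(x)$ is required.
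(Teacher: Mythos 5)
Your outline is the same as the paper's (which itself only says ``compute $b_{i,n-1}=3^{i}\sum_{j\le i}a_{j,n}/3^{j}$ and simplify''), and your low-range and $i=n$ cases are fine. But the middle-range step, as you have written it, does not go through. If you literally substitute the tabulated coefficients $a_{j,n}=\frac{(2n-5j)2^{n-j}}{j}C(j,n-j)$ into $b_{i,n-1}=3^{i}\sum_{j=0}^{i}a_{j,n}/3^{j}$, you obtain exactly $3^{i}\bigl(1+2^{n}\sum_{j}\frac{2n-5j}{j6^{j}}C(j,n-j)\bigr)$ and nothing more: the $j=i$ summand is wholly absorbed into that sum, so there is no ``boundary contribution'' left over to become $\frac{2^{n}C(i,n-i)}{2^{i+1}}$, and the quantity $\frac{4n-9i}{i}$ you invoke never arises in this computation. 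A concrete check: for $n=2$ the tabulated coefficients give $Z_{2}(x)=1-2x-3x^{2}$, hence $[x^{1}]\,Z_{2}(x)/(1-3x)=1$, whereas $e(1,2)=2$; indeed $H_{2}(x)=(1+x)^{2}$ and the true $(1-3x)\bigl(H_{2}(x)-\tfrac12x^{2}\bigr)=1-x-\tfrac{11}{2}x^{2}-\tfrac32x^{3}$. So the displayed $a_{i,n}$ (and the displayed expansion of $H_{n}(x)$ just before the theorem) are not the correct coefficients of $(1-3x)\bigl(H_{n}(x)-\tfrac12x^{n}\bigr)$ --- a defect inherited from the paper, whose final formula is nonetheless correct.

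The binomial term actually comes from redoing the Chebyshev expansion. The coefficient of $x^{n-k}$ in $\tfrac12\bigl(8T_{n}-9U_{n}\bigr)$ (after the substitution) is $2^{k-1}C(n-k,k)\frac{9k-5n}{n-k}$, i.e.\ with $i=n-k$ it equals $2^{n-i-1}C(i,n-i)\frac{4n-9i}{i}$; this is exactly where your identity $\frac{4n-9i}{i}=1+\frac{2(2n-5i)}{i}$ enters, splitting the coefficient into the tabulated $a_{i,n}$ plus $2^{n-i-1}C(i,n-i)$. In addition the $-\tfrac32 xU_{n}$ piece contributes a degree-shifted term $-3\cdot 2^{n-i}C(i-1,n-i+1)$ to the coefficient of $x^{i}$. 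When you then form $\sum_{j\le i}(\cdot)\,3^{i-j}$, these two extra families telescope, since $3\cdot2^{n-j}C(j-1,n-j+1)\,3^{i-j}=2^{n-(j-1)-1}C(j-1,n-(j-1))\,3^{i-(j-1)}$ and the lower boundary term vanishes because $C(j_{0}-1,n-j_{0}+1)=0$ for $j_{0}=n-\lfloor n/2\rfloor$; what survives is the single value $2^{n-i-1}C(i,n-i)=\frac{2^{n}C(i,n-i)}{2^{i+1}}$. So the identity you cite is the right ingredient, but it must be applied to the corrected coefficients and followed by this telescoping; alternatively one can bypass the Chebyshev computation and verify the stated formula directly against the recurrence $e(i,n+2)=e(i-1,n+1)+2e(i-1,n)-[i=n+1]$. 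As written, your proposal has a genuine gap at precisely the point where the only nontrivial term of the theorem is produced.
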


We tried to obtain more a convenient expression for the numbers
$e(i,n)$ with $n-\lfloor\frac{n}{2}\rfloor\leq i\leq n-1$. However
we have been unable to do this. This leads to the following

\begin{prob}
Find a simpler expression for the number $e(i,n)$ in case of
$n-\lfloor\frac{n}{2}\rfloor\leq i\leq n-1$.
\end{prob}

\section{Generating function for the sum $S_{k}(n)=\sum_{a:\;e(a)=n}a^{k}$ }\label{section3}

In this section we prove one more interesting result concerning the
sequence $\{e(n)\}_{n=1}^{\infty}$. More precisely, for a fixed
integer $n$ we are interested in the computation of a sum involving
all elements of the form $i^{k}$, where $i$ is a solution of the
equation $e(i)=n$ and $k$ is given. In the recent paper \cite{Ul} we
compute this sum for $k=0$, i. e. we compute the number of solutions
of the equation $e(i)=n$. Let us introduce the set
\begin{equation*}
\cal{M}(n)=\{i\in\N:\;e(i)=n\}.
\end{equation*}
From the cited result we know that the cardinality of the set
$\cal{M}(n)$ is $3^{n}$. We thus see that the sum we are interested
in
\begin{equation*}
S_{k}(n)=\sum_{a\in\cal{M}(n)}a^{k},
\end{equation*}
is finite for any given $k$ and $n$. In order to tackle the problem
we introduce the generating function
\begin{equation*}
G_{k}(x)=\sum_{n=1}^{\infty}n^{k}x^{e(n)}=\sum_{n=0}^{\infty}S_{k}(n)x^{n}.
\end{equation*}
We know that $G_{0}(x)=1/(1-3x)$. We show how $G_{k}(x)$ can be
computed. We prove the following.

\begin{thm}\label{sumofi}
Let $k$ ba a positive integer and let us consider the function
$G_{k}(x)$. Then $G_{k}(x)$ can be computed as a linear combination
of $G_{i}(x), (i=0,1,\ldots,k-1)$ with rational functions as
coefficients. More precisely we have the following expression:
\begin{equation*}
G_{k}(x)=\frac{x\sum_{j=0}^{k-1}C(k,j)(4^{j}+(-1)^{k}(-4)^{j})G_{j}(x)+1}{1-2^{k}(2^{k+1}+1)x}.
\end{equation*}
\end{thm}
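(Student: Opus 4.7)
The plan is to exploit the three-branch recurrence
\[
e(2n)=e(n)+1,\quad e(4n+1)=e(n)+1,\quad e(4n+3)=e(n+1)+1
\]
to split the defining sum of $G_k(x)$ into pieces indexed by even, $1\bmod 4$, and $3\bmod 4$ integers, and then to reduce each piece to lower-order $G_j$'s using the binomial theorem.

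First I would write
\[
G_k(x)=1+\sum_{n\geq 2}n^kx^{e(n)}
=1+\sum_{m\geq 1}(2m)^k x^{e(m)+1}+\sum_{m\geq 1}(4m+1)^kx^{e(m)+1}+\sum_{m\geq 0}(4m+3)^k x^{e(m+1)+1},
\]
where the isolated $1$ accounts for $n=1$ (since $e(1)=0$), and the $m\geq 1$ restriction in the $4m+1$ sum is forced because $e(1)$ is not of the form $e(0)+1$. The key tidying move is to reindex the last sum by $m\mapsto m-1$, turning $4m+3$ into $4m-1$ with $m\geq 1$. After this reindexing, all three sums run over $m\geq 1$ with a common factor $x^{e(m)+1}$, so
\[
G_k(x)=1+x\sum_{m\geq 1}\bigl[(2m)^k+(4m+1)^k+(4m-1)^k\bigr]x^{e(m)}.
\]

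Next I would expand the bracket with the binomial theorem. One checks that
\[
(4m+1)^k+(4m-1)^k=\sum_{j=0}^{k}\binom{k}{j}4^j\bigl(1+(-1)^{k-j}\bigr)m^j
=\sum_{j=0}^{k}\binom{k}{j}\bigl(4^j+(-1)^k(-4)^j\bigr)m^j,
\]
and combining with $(2m)^k=2^km^k$ gives total coefficient of $m^k$ equal to $2^k+2\cdot 4^k=2^k(2^{k+1}+1)$, while for $0\le j\le k-1$ the coefficient of $m^j$ is exactly $\binom{k}{j}(4^j+(-1)^k(-4)^j)$. Using $\sum_{m\geq 1}m^jx^{e(m)}=G_j(x)$ (valid for $j\geq 0$, with $G_0(x)=1/(1-3x)$ from Theorem~\ref{citedthm}), the identity becomes
\[
G_k(x)=1+x\,2^k(2^{k+1}+1)G_k(x)+x\sum_{j=0}^{k-1}\binom{k}{j}\bigl(4^j+(-1)^k(-4)^j\bigr)G_j(x).
\]
Solving this linear equation for $G_k(x)$ yields the asserted closed form.

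The only mildly delicate point is bookkeeping of the starting indices, especially making sure that the reindexing of the $4m+3$ branch correctly absorbs the term coming from $n=3$ and that no double counting of $n=1$ occurs; once that is done, the rest is a routine binomial identification. The proof is then essentially algebraic and requires no further analytic input.
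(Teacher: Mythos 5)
Your proof is correct and follows essentially the same route as the paper: split the sum over the residue classes $2m$, $4m+1$, $4m+3$ using the recurrence $e(2n)=e(n)+1$, $e(4n+1)=e(n)+1$, $e(4n+3)=e(n+1)+1$, reindex the last branch to $4m-1$, expand by the binomial theorem so the $j=k$ terms contribute $2^k(2^{k+1}+1)xG_k(x)$, and solve the resulting linear equation. The only cosmetic difference is that you combine $(2m)^k+(4m+1)^k+(4m-1)^k$ into a single bracket before expanding, whereas the paper expands the two odd branches separately and then collects terms.
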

\begin{proof}
Using the recurrence relation for the sequence $e(n)$ and the
expression for $G_{0}(x)$ we obtain the recurrence relation
satisfied by the sequence $G_{k}(x),\;k=0,1,2,\ldots .$ We have the
following equality:
\begin{equation*}
G_{k}(x)=\sum_{i=1}^{\infty}i^{k}x^{e(i)}=\sum_{i=1}^{\infty}(2i)^{k}x^{e(2i)}+1+\sum_{i=1}^{\infty}(4i+1)^{k}x^{e(4i+1)}+\sum_{i=0}^{\infty}(4i+3)^{k}x^{e(4i+3)}
\end{equation*}
Now, let us note that
$\sum_{i=1}^{\infty}(2i)^{k}x^{e(2i)}=2^{k}xG_{k}(x)$. Moreover, we
have an equality
\begin{equation*}
\sum_{i=1}^{\infty}(4i+1)^{k}x^{e(4i+1)}=x\sum_{i=1}^{\infty}\sum_{j=0}^{k}C(k,j)4^{j}i^{j}x^{e(i)}=x\sum_{j=0}^{k}C(k,j)4^{j}G_{j}(x),
\end{equation*}
and
\begin{align*}
\sum_{i=0}^{\infty}&(4i+3)^{k}x^{e(4i+3)}=x\sum_{i=0}^{\infty}\sum_{j=0}^{k}(4(i+1)-1)^{k}x^{e(i+1)}=x\sum_{i=1}^{\infty}\sum_{j=0}^{k}(4i-1)^{k}x^{e(i)}\\
                   &=(-1)^{k}x\sum_{i=1}\sum_{j=0}^{k}C(k,j)(-4)^{j}i^{j}x^{e(i)}=(-1)^{k}x\sum_{j=0}^{k}C(k,j)(-4)^{j}G_{j}(x).
\end{align*}
Gathering now the obtained expressions together we get
\begin{align*}
G_{k}(x)&=2^{k}xG_{k}(x)+x\sum_{j=0}^{k}C(k,j)4^{j}G_{j}(x)+(-1)^{k}x\sum_{j=0}^{k}C(k,j)(-4)^{j}G_{j}(x)+1\\
        &=2^{k}(2^{k+1}+1)xG_{k}(x)+x\sum_{j=0}^{k-1}C(k,j)(4^{j}+(-1)^{k}(-4)^{j})G_{j}(x)+1.
\end{align*}

Solving now the above functional equation with respect to $G_{k}(x)$
(this is a linear equation!) we get the expression displayed in the
statement of the theorem.
\end{proof}

From the theorem we have just proved we deduce the following set of
corollaries.

\begin{cor}
Let $k$ be a nonnegative integer and let us consider the generating
function $G_{k}(x)$.
\begin{enumerate}
\item If $k$ is even then $G_{k}(x)$ is a linear combination (with rational functions as
coefficients) of the functions $G_{2i}(x)$ for
$i=0,1,\ldots,\frac{k}{2}$.

\item If $k$ is odd then $G_{k}(x)$ is a linear combination (with rational functions as
coefficients) of the functions $G_{2i-1}(x)$ for
$i=0,1,\ldots,\frac{k+1}{2}$.
\end{enumerate}
\end{cor}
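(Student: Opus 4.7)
The plan is to prove both parts of the corollary simultaneously by strong induction on $k$, taking Theorem \ref{sumofi} as the main input. The key observation is the parity identity
\begin{equation*}
4^{j}+(-1)^{k}(-4)^{j} \;=\; 4^{j}\bigl(1+(-1)^{k+j}\bigr),
\end{equation*}
which vanishes when $j\not\equiv k\pmod 2$ and equals $2\cdot 4^{j}$ otherwise. Hence in the formula for $G_{k}(x)$ supplied by Theorem \ref{sumofi}, only those $G_{j}(x)$ with $0\leq j<k$ and $j\equiv k\pmod 2$ appear with nonzero coefficient.

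The base cases are immediate: $G_{0}(x)=1/(1-3x)$ is itself, while a direct application of Theorem \ref{sumofi} with $k=1$ (in which the inner sum collapses because $4^{0}-(-4)^{0}=0$) gives $G_{1}(x)=1/(1-10x)$. For the inductive step, suppose the statement holds for all indices less than $k\geq 2$. If $k$ is even, the parity observation rewrites the expression from Theorem \ref{sumofi} as
\begin{equation*}
G_{k}(x)=\frac{2x\sum_{i=0}^{k/2-1}C(k,2i)\,16^{\,i}\,G_{2i}(x)+1}{1-2^{k}(2^{k+1}+1)x}.
\end{equation*}
By the inductive hypothesis, each $G_{2i}(x)$ with $i<k/2$ is a rational-function linear combination of $G_{0}(x),G_{2}(x),\ldots,G_{2i}(x)$, and the stray additive constant $1$ in the numerator is absorbed by writing $1=(1-3x)G_{0}(x)$. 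Substituting produces the desired representation of $G_{k}(x)$. The case $k$ odd is completely analogous: the same parity observation leaves only odd-indexed $G_{j}(x)$ with $j<k$ in the sum, the inductive hypothesis handles them, and the constant $1$ is absorbed via $1=(1-10x)G_{1}(x)$.

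I do not expect a serious obstacle; the argument is essentially a bookkeeping reduction to the parity cancellation inside $4^{j}+(-1)^{k}(-4)^{j}$. The only point that deserves a line of attention is the absorption of the free constant $1$ into $G_{0}$ or $G_{1}$, but since both of these are nonzero rational functions with explicit inverses $1-3x$ and $1-10x$, this is a one-line manipulation and does not affect the rationality of the coefficients produced.
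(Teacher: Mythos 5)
Your proof is correct and follows essentially the same route as the paper: the paper's (one-line) argument is precisely that the coefficient $4^{j}+(-1)^{k}(-4)^{j}$ kills all terms $G_{j}$ with $j\not\equiv k\pmod 2$, so only same-parity indices survive. Your write-up merely makes explicit the induction and the absorption of the additive constant $1$ into $G_{0}$ or $G_{1}$, details the paper leaves implicit.
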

\begin{proof}
This is an immediate consequence of the expression displayed in the
statement of Theorem \ref{sumofi}. Indeed, if $k$ is even then the
expression for $G_{k}(x)$ contains only terms with $G_{2i}$ for
$i=0,1,\ldots,k/2$. Similar reasoning applies in the case of $k$
odd.
\end{proof}

\begin{cor}
Let $k, n\in\N$ and let us put $T_{i}=2^{i}(2^{i+1}+1)$. Then there
exist rational numbers $\alpha_{i}$ for $i=0,1,\ldots k$ such that
\begin{equation*}
S_{k}(n)=\begin{cases}\begin{array}{lll}
                        \sum_{i=0}^{\frac{k}{2}}\alpha_{2i}T_{2i}^{n}       & \mbox{if} & k\;\mbox{is even}, \\
                        \\
                        \sum_{i=1}^{\frac{k+1}{2}}\alpha_{2i-1}T_{2i-1}^{n} & \mbox{if} & k\;\mbox{is
                        odd}.
                      \end{array}
\end{cases}
\end{equation*}
\end{cor}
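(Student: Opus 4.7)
The plan is to combine Theorem \ref{sumofi} with the preceding parity corollary, an induction on $k$, and a partial fraction expansion. The target is to show that $G_k(x)$ admits a decomposition
$$G_k(x) = \sum_{i \in I_k} \frac{\alpha_i}{1 - T_i x}$$
with rational $\alpha_i$ and no polynomial summand, where $I_k = \{0,2,\ldots,k\}$ if $k$ is even and $I_k = \{1,3,\ldots,k\}$ if $k$ is odd. Once this is established, reading off the coefficient of $x^n$ and using $[x^n]\,1/(1-T_i x) = T_i^n$ yields $S_k(n) = \sum_{i\in I_k}\alpha_i T_i^n$, which is exactly the claimed formula.

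First I would prove by induction on $k$ that $G_k(x) = P_k(x)/D_k(x)$, where $D_k(x) = \prod_{i \in I_k}(1 - T_i x)$ and $\deg P_k < \deg D_k$. The base cases are $G_0(x) = 1/(1 - 3x) = 1/(1 - T_0 x)$ and, by direct evaluation in Theorem \ref{sumofi}, $G_1(x) = 1/(1 - 10x) = 1/(1 - T_1 x)$ (the single term in the numerator vanishes because $4^0 + (-1)(-4)^0 = 0$). For the inductive step, the parity corollary guarantees that only the $G_j$ with $j < k$ and $j \equiv k \pmod 2$ enter the numerator in Theorem \ref{sumofi}; by the induction hypothesis each such $G_j = P_j / D_j$ with $D_j$ dividing $\tilde D_k := D_k/(1 - T_k x)$ and $\deg P_j < \deg D_j$. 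Clearing denominators, $\sum_j c_j G_j(x) = Q(x)/\tilde D_k(x)$ for some polynomial $Q$ with $\deg Q < \deg \tilde D_k$, so
$$G_k(x) = \frac{x Q(x) + \tilde D_k(x)}{(1 - T_k x)\,\tilde D_k(x)} = \frac{x Q(x) + \tilde D_k(x)}{D_k(x)},$$
and the numerator has degree at most $\deg \tilde D_k = \deg D_k - 1 < \deg D_k$, closing the induction.

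Second, since the values $T_i = 2^i(2^{i+1}+1)$ are strictly increasing in $i$, the roots of $D_k$ are simple and distinct. Combined with $\deg P_k < \deg D_k$, this forces the partial fraction expansion of $G_k$ to be a pure sum of simple fractions $\alpha_i/(1 - T_i x)$ indexed by $i \in I_k$, with no polynomial piece. Extracting the coefficient of $x^n$ then produces the stated closed form of $S_k(n)$ for every $n \in \N$, split naturally into the even and odd cases according to $I_k$.

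The only delicate point, and therefore the main obstacle, is the degree bookkeeping that rules out a polynomial contribution in the partial fraction expansion: a careless reading of the recursion from Theorem \ref{sumofi} might suggest that the isolated ``$+1$'' in the numerator could push the numerator degree up to equal $\deg D_k$, which would add a finitely-supported correction term to $S_k(n)$ and break the uniform closed form. The inductive bound $\deg(xQ + \tilde D_k) \leq \deg \tilde D_k = \deg D_k - 1$ is precisely what prevents this, and is what makes the formula $S_k(n) = \sum_{i \in I_k}\alpha_i T_i^n$ valid for all $n$ and not merely for large $n$.
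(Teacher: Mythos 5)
Your proposal is correct, and it follows the route the paper clearly intends: the paper states this corollary without proof as an immediate consequence of Theorem \ref{sumofi} and the preceding parity corollary, and your induction showing $G_k(x)=P_k(x)/D_k(x)$ with $\deg P_k<\deg D_k=\prod_{i\in I_k}(1-T_ix)$, followed by partial fractions over the distinct roots $1/T_i$, is precisely the argument that justifies it. The degree bookkeeping you highlight (ruling out a polynomial summand, hence a finitely-supported correction to $S_k(n)$) is a genuine point the paper glosses over, and you handle it correctly.
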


\begin{cor}
Let $n\geq 0$ be given. Then we have the following equalities
\begin{equation*}
S_{1}(n)=10^{n},\quad \quad S_{2}(n)=\frac{35\cdot 36^{n}-2\cdot
3^{n}}{33},\quad\quad S_{3}(n)=\frac{25\cdot 136^{n}-4\cdot
10^{n}}{21}.
\end{equation*}
\end{cor}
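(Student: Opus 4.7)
The plan is to apply the recurrence from Theorem \ref{sumofi} iteratively for $k=1,2,3$, starting from the base case $G_{0}(x)=1/(1-3x)$. For each $k$, I first simplify the coefficient $4^{j}+(-1)^{k}(-4)^{j}$ in the numerator (which kills many terms), then solve the resulting closed form for $G_{k}(x)$ and extract the coefficients via partial fractions.

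For $k=1$: the coefficient $4^{j}-(-4)^{j}$ vanishes at $j=0$, so the entire sum in the numerator disappears and Theorem \ref{sumofi} collapses to
\begin{equation*}
G_{1}(x)=\frac{1}{1-2(2^{2}+1)x}=\frac{1}{1-10x},
\end{equation*}
which gives $S_{1}(n)=10^{n}$ directly. For $k=2$: the coefficient $4^{j}+(-4)^{j}$ vanishes at $j=1$, so only $j=0$ contributes, yielding
\begin{equation*}
G_{2}(x)=\frac{2x\cdot G_{0}(x)+1}{1-36x}=\frac{1-x}{(1-3x)(1-36x)}.
\end{equation*}
A routine partial-fraction decomposition produces coefficients $-2/33$ and $35/33$ on $1/(1-3x)$ and $1/(1-36x)$ respectively, giving the stated formula for $S_{2}(n)$.

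For $k=3$: only the $j=1$ term survives in the sum (since $4^{j}-(-4)^{j}=0$ for even $j$), and $C(3,1)(4-(-4))=24$. Using the already-computed $G_{1}(x)=1/(1-10x)$, we obtain
\begin{equation*}
G_{3}(x)=\frac{24x\,G_{1}(x)+1}{1-136x}=\frac{1+14x}{(1-10x)(1-136x)}.
\end{equation*}
Another partial-fraction decomposition gives $-4/21$ and $25/21$ as the respective coefficients, hence the claimed expression for $S_{3}(n)$.

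There is no real obstacle here: once Theorem \ref{sumofi} is in hand, the proof is entirely mechanical. The only point requiring a slight bit of attention is observing that the parity of $k$ forces many of the coefficients $4^{j}+(-1)^{k}(-4)^{j}$ to vanish, which is exactly the phenomenon already recorded in the preceding corollary on even/odd decoupling and which keeps these low-order cases tractable.
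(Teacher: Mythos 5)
Your proposal is correct and follows essentially the same route as the paper: apply Theorem \ref{sumofi} with the parity-induced vanishing of the coefficients $C(k,j)(4^{j}+(-1)^{k}(-4)^{j})$ to obtain $G_{1}(x)=\frac{1}{1-10x}$, $G_{2}(x)=\frac{1-x}{(1-3x)(1-36x)}$, $G_{3}(x)=\frac{1+14x}{(1-10x)(1-136x)}$, and then read off $S_{k}(n)$ by partial fractions. You in fact supply more detail than the paper does on how the closed forms for $G_{k}(x)$ arise, and all your intermediate expressions and partial-fraction coefficients check out.
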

\begin{proof}
We use the result from the previous theorem. If $k=1$ then we easily
get that $G_{1}(x)=\frac{1}{1-10x}$. Comparing now the coefficients
of the power series expansions of the two functions we get the
expression for $S_{1}(n)$ displayed in the statement. For $k=2$ we
get that
\begin{equation*}
G_{2}(x)=\frac{1-x}{(1-3x)(1-36x)}=\frac{35}{33}\cdot\frac{1}{1-36x}-\frac{2}{33}\cdot\frac{1}{1-3x}.
\end{equation*}
Using the same method as in the case of $G_{1}(x)$ we get the
expression for $S_{2}(n)$ displayed in the statement of the
corollary.

Finally, if $k=3$ then
\begin{equation*}
G_{3}(x)=\frac{1 +
14x}{(1-10x)(1-136x)}=\frac{25}{21}\cdot\frac{1}{1-136x}-\frac{4}{21}\cdot\frac{1}{1-10x},
\end{equation*}
and we easily get the expression for $S_{3}(n)$.
\end{proof}

\section{A determinant expression for the $B_{n}(t)$}\label{section4}

In a recent paper \cite[Theorem 2.4]{Ul} we proved that if $\mu(n)$
is the highest power of $2$ dividing $n$ then the following identity
holds
\begin{equation*}
t^{\mu(n)}(B_{n+1}(t)+B_{n-1}(t))=(B_{2^{\mu(n)}+1}(t)+B_{2^{\mu(n)}-1}(t))B_{n}(t).
\end{equation*}
From the above identity we easily deduce that the sequence
$\{B_{n}(t)\}_{n=0}^{\infty}$ satisfies a three term recurrence
relation with variable coefficients of the form
\begin{equation}\label{altrec}
B_{0}(t)=0,\quad B_{1}(t)=1,\quad
B_{n+1}(t)=A_{n}(t)B_{n}(t)-B_{n-1}(t),
\end{equation}
where
\begin{equation*}
A_{n}(t)=t^{-\mu(n)}(B_{2^{\mu(n)}+1}(t)+B_{2^{\mu(n)}-1}(t)).
\end{equation*}
Because $B_{2^{n}-1}(t)=(t^{n}-1)/(t-1)$ and
$B_{2^{n}+1}(t)=(t^{n}-1)/(t-1)+t$ \cite[Corollary 2.2]{Ul}, we get
that
\begin{equation*}
A_{n}(t)=t^{-\mu(n)}\left(2\frac{t^{\mu(n)}-1}{t-1}+t\right).
\end{equation*}
We thus see that the function $A_{n}(t)$ does not depend on the
polynomials $B_{i}(t),\;i=0,1,\ldots, n$.

The recurrence relation (\ref{altrec}) permits us to give a new
definition of the Stern polynomial $B_{n}(t)$ as a determinant.
Indeed, let us consider (\ref{altrec}), with $n$ replaced by $n-1$,
as a homogeneous linear equation in three "unknowns" $B_{n},
B_{n-1}, B_{n-2}$, namely $B_{n}-A_{n-1}B_{n-1}+B_{n-2}=0$. Next, we
replace $n$ successively by $n-1, n-2, \ldots, 3, 2.$ For $n=3$ the
equation reads $B_{3}-A_{2}B_{2}=-1$. For $n=2$ we have
$B_{2}-A_{1}B_{1}=0$ and finally for $n=1$ we get $B_{1}=1$. We
solve this system of $n$ linear equations in $B_{n}, B_{n-1},\ldots,
B_{1}$ by Cramer's rule. The determinant of the system is
\begin{equation*}
D=\begin{array}{|cccccccc|}
    1      & -A_{n-1}(t)  &         1 & 0        &  \ldots & 0 & 0 &0\\
    0      &        1  &  -A_{n-2}(t) & 1        &  \ldots & 0 & 0 &0\\
        \vdots & \vdots    & \vdots     & \vdots &       &  \vdots & \vdots & \vdots\\
    0 & 0 & 0 & 0 & \ldots &1 & -A_{2}(t) & 0 \\
    0 & 0 & 0 & 0 & \ldots &0 & 1 & -A_{1}(t) \\
    0 & 0 & 0 & 0 & \ldots&0 & 0 & 1
  \end{array}\;,
\end{equation*}

\noindent and $B_{n}(t)D=D_{n}$, where $D_{n}$ is obtained by
replacing the first column of $D$ by the column vector of the
constant terms. The entries of this vector are, as seen, all zeros,
except for the last three, which are $-1$ (from $B_{3}$), $0$ (from
$B_{2}$) and $1$ (from $B_{1}$). Because the matrix associated with
$D$ is triangular we have that $D=1$ and finally we get the
determinant expression for the $n$-th Stern polynomial in the form
\begin{equation*}
B_{n}(t)=\begin{array}{|cccccccc|}
    0      & -A_{n-1}(t)  &         1 & 0        &  \ldots & 0 & 0 &0\\
    0      &        1  &  -A_{n-2}(t) & 1        &  \ldots & 0 & 0 &0\\
        \vdots & \vdots    & \vdots     & \vdots &       &  \vdots & \vdots & \vdots\\
    -1 & 0 & 0 & 0 & \ldots &1 & -A_{2}(t) & 0 \\
    0 & 0 & 0 & 0 & \ldots &0 & 1 & -A_{1}(t) \\
    1 & 0 & 0 & 0 & \ldots&0 & 0 & 1
  \end{array}\;.
\end{equation*}

\section{An interesting sum involving Stern polynomials}\label{section5}

In this section we are interested in the computation of the certain
sum involving the reciprocals of products of two consecutive Stern
polynomials. This is a variation on a theme of generalization of the
(not so well known) identity
\begin{equation*}
\sum_{i=m}^{2m-1}\frac{1}{s(i)s(i+1)}=1,
\end{equation*}
where $s(i)=B_{i}(1)$ is the $i$-th term of the Stern diatomic
sequence. The proof of this identity with an interesting discussion
can be found in \cite{Urb}.

Before we state the main result in this section we introduce an
interesting family of polynomials which will we need in the sequel.
Let $S_{1}(t)=S_{2}(t)=0$ and for $k\geq 1$ we put
\begin{equation*}
S_{2k}(t)=tS_{k}(t),\quad\quad
S_{2k+1}(t)=S_{k}(t)+S_{k+1}(t)+t^{\lfloor\log k \rfloor}.
\end{equation*}
The first few terms of the sequence $\{S_{k}(t)\}_{n=0}^{\infty}$
are contained in the table below
\begin{equation*}
\begin{tabular}{|l|l|l|l|l|l|l|l|}
  $n$ & $S_{n}(t)$ & $n$ & $S_{n}(t)$ & $n$  & $S_{n}(t)$ & $n$ & $S_{n}(t)$ \\
  \hline
  1 &        0 & 5 &      $1+t$ & 9  & $1+t+t^2$          & 13 & $(1+t)^2$\\
  2 &        0 & 6 &        $t$ & 10 & $t(1+t)$           & 14 & $t(1+t)$\\
  3 &        1 & 7 &      $1+t$ & 11 & $(1+t)^2$          & 15 & $1+t+t^2$\\
  4 &        0 & 8 &        0 & 12 & $t^2$                & 16 & 0\\
\end{tabular}
\end{equation*}

In the sequel we will need the following result which connects the
sequences $\{B_{n}(t)\}_{n=0}^{\infty}$ and
$\{S_{n}(t)\}_{n=1}^{\infty}$.

\begin{lem}\label{mainlemma}
Let $n\in\N_{+}$. Then the following identity holds
\begin{equation}\label{impid}
(2-t)(B_{n}(t)S_{n+1}(t)-S_{n}(t)B_{n+1}(t))=t^{\lfloor \log
_{2}k\rfloor}(B_{n+1}(t)-B_{n}(t)-t+1).
\end{equation}
\end{lem}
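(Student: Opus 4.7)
The plan is to argue by induction on $n$, splitting the inductive step according to the parity of $n$ and using the recurrences for $B_{n}(t)$ and $S_{n}(t)$ simultaneously. Set
\[
L_{n}(t):=B_{n}(t)S_{n+1}(t)-S_{n}(t)B_{n+1}(t),
\]
so the desired identity reads $(2-t)L_{n}(t)=t^{\lfloor\log_{2}n\rfloor}(B_{n+1}(t)-B_{n}(t)-t+1)$. The base case $n=1$ is immediate: $S_{1}=S_{2}=0$ forces $L_{1}=0$, and the right-hand side is $t^{0}(t-1-t+1)=0$.

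For the inductive step I would first treat the even case $n=2m$. Substituting the four recurrences $B_{2m}=tB_{m}$, $B_{2m+1}=B_{m}+B_{m+1}$, $S_{2m}=tS_{m}$, and $S_{2m+1}=S_{m}+S_{m+1}+t^{\lfloor\log_{2}m\rfloor}$ into $L_{2m}$, the symmetric pieces telescope to $tL_{m}$ and one is left with an extra term $t^{\lfloor\log_{2}m\rfloor+1}B_{m}$. Multiplying by $2-t$, invoking the induction hypothesis for $L_{m}$, and using $B_{2m+1}-B_{2m}=B_{m}+B_{m+1}-tB_{m}$, a direct collection of terms should produce the claimed right-hand side with exponent $\lfloor\log_{2}m\rfloor+1=\lfloor\log_{2}n\rfloor$.

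The odd case $n=2m+1$ with $m\geq 1$ is parallel: the same substitution yields $L_{n}=tL_{m}-t^{\lfloor\log_{2}m\rfloor+1}B_{m+1}$ (note the opposite sign on the extra term), and after multiplying by $2-t$ and applying induction, the identity $B_{2m+2}-B_{2m+1}=tB_{m+1}-B_{m}-B_{m+1}$ delivers the target. The exponent identity $\lfloor\log_{2}(2m+1)\rfloor=\lfloor\log_{2}m\rfloor+1$ needed here is valid for every $m\geq 1$, because $2m+1$ is odd and therefore cannot itself equal a power of two, so $\lfloor\log_{2}(2m+1)\rfloor=\lfloor\log_{2}(2m)\rfloor$.

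The main obstacle is organisational rather than conceptual: I have to keep careful track of the extra $t^{\lfloor\log_{2}m\rfloor}$ entering from the $S_{2m+1}$ recurrence, manage its sign reversal between the two parity branches, and check the floor-of-log identity in each case. Beyond that, the argument is just a simultaneous induction on the two linked recurrences, and no new idea beyond careful bookkeeping should be required.
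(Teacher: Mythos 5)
Your proposal is correct and follows essentially the same route as the paper: induction on $n$ split by parity, substituting the recurrences for $B$ and $S$ to reduce $L_{2m}$ and $L_{2m+1}$ to $tL_{m}$ plus the extra term $+t^{\lfloor\log_{2}m\rfloor+1}B_{m}$ (resp.\ $-t^{\lfloor\log_{2}m\rfloor+1}B_{m+1}$), then applying the induction hypothesis and the identity $\lfloor\log_{2}(2m+1)\rfloor=\lfloor\log_{2}m\rfloor+1$. The only cosmetic difference is that the paper also checks $n=2$ as a base case, which your argument does not need since the inductive step already handles $m=1$.
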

\begin{proof}
In order to prove our theorem we proceed by induction on $n$. The
desired identity is clearly true for $n=1$ (in this case both sides
are equal to 0) and for $n=2$ (in this case both sides are equal to
$(2-t)t)$. Let us suppose that our identity is true for all numbers
$<n$. We prove that it is true for $n$. We consider two cases: $n$
even and $n$ odd.

If $n$ is even, then $n=2m$ for some $m\in\N_{+}$. In order to
shorten the notation we put $B_{n}:=B_{n}(t)$ and $S_{n}:=S_{n}(t)$.
Recall now that $B_{2m}=tB_{m}$ and $B_{2m+1}=B_{m}+B_{m+1}$. Using
now the recurrence relation for $B_{n}$ and $S_{n}$ we have the
following chain of equalities
\begin{align*}
(2&-t)(B_{2m}S_{2m+1}-S_{2m}B_{2m+1})\\
  &=t(2-t)(B_{m}(S_{m}+S_{m+1}+t^{\lfloor\log_{2}m\rfloor})-S_{m}(B_{m}+B_{m+1}))\\
  &=t(2-t)(B_{m}S_{m+1}-S_{m}B_{m+1})+t^{\lfloor\log_{2}m\rfloor+1}(2-t)B_{m}\\
  &=t^{\lfloor\log_{2}m\rfloor+1}(B_{m+1}-B_{m}-t+1)+t^{\lfloor\log_{2}m\rfloor+1}(2-t)B_{m}\\
  &=t^{\lfloor\log_{2}(2m)\rfloor}(B_{m+1}+B_{m}-tB_{m}-t+1)\\
  &=t^{\lfloor\log_{2}(2m)\rfloor}(B_{2m+1}-B_{2m}-t+1).
\end{align*}
This finishes the proof of our identity in case of $n$ even.

If $n$ is odd, then $n=2m+1$ for some $m\in\N$. Using similar
reasoning as in the previous case we get the following chain of
equalities
\begin{align*}
(2&-t)(B_{2m+1}S_{2m+2}-S_{2m+1}B_{2m+2})\\
  &=t(2-t)((B_{m}+B_{m+1})S_{m+1}-(S_{m}+S_{m+1}+t^{\lfloor\log_{2}m\rfloor})B_{m+1})\\
  &=t(2-t)(B_{m}S_{m+1}-S_{m+1}B_{m})-(2-t)t^{\lfloor\log_{2}m\rfloor+1}B_{m+1}\\
  &=t^{\lfloor\log_{2}m\rfloor+1}(B_{m+1}-B_{m}-t+1)-(2-t)t^{\lfloor\log_{2}m\rfloor+1}B_{m+1}\\
  &=t^{\lfloor\log_{2}m\rfloor+1}(B_{m+1}-B_{m}-t+1-2B_{m+1}+tB_{m+1})\\
  &=t^{\lfloor\log_{2}(2m+1)\rfloor}(B_{2m+2}-B_{2m+1}-t+1),
\end{align*}
where in the last identity we use an obvious fact that $\lfloor
\log_{2}m\rfloor+1=\lfloor \log_{2}(2m+1)\rfloor$. This finishes the
proof of our identity in case of $n$ odd. Gathering now what we have
we deduce that the identity (\ref{impid}) is true for all $n\in\N$.
\end{proof}

From the above lemma we deduce an interesting corollary which will
be important in the proof of the main result of this section.

\begin{cor}\label{mainid}
Let $k\in\N_{+}$. Then we have the following identity
\begin{equation}\label{impid2}
\frac{2-t}{t^{\lfloor
\log_{2}k\rfloor}}\frac{S_{2k+1}(t)}{B_{2k+1}(t)}=\frac{2-t}{t^{\lfloor
\log_{2}k\rfloor}}\frac{S_{k}(t)}{B_{k}(t)}+\frac{B_{k+1}(t)-(t-1)(B_{k}(t)+1)}{B_{k}(t)B_{2k+1}(t)}.
\end{equation}
\end{cor}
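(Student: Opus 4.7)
The plan is to verify the identity by a direct calculation, combining the two terms involving $S_k/B_k$ and $S_{2k+1}/B_{2k+1}$ over the common denominator $B_k(t) B_{2k+1}(t)$ and then invoking Lemma \ref{mainlemma}. Specifically, I would rewrite
\begin{equation*}
\frac{2-t}{t^{\lfloor \log_{2}k\rfloor}}\left(\frac{S_{2k+1}(t)}{B_{2k+1}(t)}-\frac{S_{k}(t)}{B_{k}(t)}\right) = \frac{2-t}{t^{\lfloor \log_{2}k\rfloor}}\cdot\frac{S_{2k+1}(t)B_{k}(t)-S_{k}(t)B_{2k+1}(t)}{B_{k}(t)B_{2k+1}(t)},
\end{equation*}
so that the goal becomes showing the numerator equals $t^{\lfloor \log_{2}k\rfloor}(B_{k+1}(t)-(t-1)(B_{k}(t)+1))$ after multiplication by $(2-t)$.

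Next, I would use the defining recurrences $B_{2k+1}=B_{k}+B_{k+1}$ and $S_{2k+1}=S_{k}+S_{k+1}+t^{\lfloor \log_{2}k\rfloor}$ to expand the numerator $S_{2k+1}B_{k}-S_{k}B_{2k+1}$. After routine cancellation, the $S_kB_k$ terms drop and what remains is
\begin{equation*}
S_{k+1}(t)B_{k}(t)-S_{k}(t)B_{k+1}(t)+t^{\lfloor \log_{2}k\rfloor}B_{k}(t).
\end{equation*}
Multiplying this by $(2-t)$ and applying Lemma \ref{mainlemma} with $n=k$ to the first two terms converts the $(2-t)(S_{k+1}B_k-S_kB_{k+1})$ piece into $t^{\lfloor \log_{2}k\rfloor}(B_{k+1}-B_{k}-t+1)$.

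Finally, I would factor out $t^{\lfloor \log_{2}k\rfloor}$ from both resulting terms to get $t^{\lfloor \log_{2}k\rfloor}(B_{k+1}-B_{k}-t+1+(2-t)B_{k})$ and simplify the bracket using $-B_k + (2-t)B_k = (1-t)B_k$ together with $-t+1 = -(t-1)$, producing $B_{k+1}(t)-(t-1)(B_{k}(t)+1)$. Dividing by $t^{\lfloor \log_{2}k\rfloor}B_{k}(t)B_{2k+1}(t)$ then matches the right-hand side of (\ref{impid2}) exactly.

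I do not anticipate a genuine obstacle here: the entire argument is mechanical once Lemma \ref{mainlemma} is available, and the only delicate point is the final grouping $-(t-1)B_k - (t-1) = -(t-1)(B_k+1)$, which is a one-line factoring. Thus Corollary \ref{mainid} really is a repackaging of the $n=k$ case of Lemma \ref{mainlemma} combined with the recurrence for $B_{2k+1}$ and $S_{2k+1}$.
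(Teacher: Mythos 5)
Your proof is correct and is essentially the paper's argument: both reduce the identity to a single application of Lemma \ref{mainlemma} plus the defining recurrences and a short factoring step. The only (immaterial) difference is the instantiation: the paper takes $n=2k$ in the lemma and uses $B_{2k}=tB_{k}$, $S_{2k}=tS_{k}$, whereas you take $n=k$ and instead expand $B_{2k+1}=B_{k}+B_{k+1}$, $S_{2k+1}=S_{k}+S_{k+1}+t^{\lfloor\log_{2}k\rfloor}$ --- which amounts to re-running the even-index step of the lemma's induction --- and both routes land on the same numerator $B_{k+1}-(t-1)(B_{k}+1)$.
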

\begin{proof}
This is a simple consequence of the identity from Lemma
\ref{mainlemma}. Indeed, we take $n=2k$ and note that
$B_{2k}(t)=B_{k}(t)$ and $S_{2k}(t)=tS_{k}(t)$. Moreover, the right
side of the identity (\ref{impid}) is clearly equal to
$B_{k+1}(t)-(t-1)(B_{k}(t)+1)$. Dividing now both sides of
(\ref{impid}) by $t^{\lfloor\log_{2}m\rfloor}B_{k}(t)B_{2k+1}(t)$
and adding the expression $\frac{2-t}{t^{\lfloor
\log_{2}k\rfloor}}\frac{S_{k}(t)}{B_{k}(t)}$ for both sides  we get
(\ref{impid2}).
\end{proof}

We are ready now to prove the following theorem.
\begin{thm}
Let $k$ be a positive integer. Then the following identity holds:
\begin{equation*}
\sum_{i=k2^{n}}^{k2^{n+1}}\frac{1}{B_{i}(t)B_{i+1}(t)}=\frac{2-t}{t^{n+\lfloor
\log
k\rfloor+1}B_{k}(t)}S_{k}(t)+\frac{1}{t^{n+1}B_{k}(t)}\left(\frac{1}{B_{k2^{n+1}+1}(t)}+1\right),
\end{equation*}
where $S_{1}(t)=S_{2}(t)=0$ and for $k\geq 2$ we have
\begin{equation*}
S_{2k}(t)=tS_{k}(t),\quad
S_{2k+1}(t)=S_{k}(t)+S_{k+1}(t)+t^{\lfloor\log k \rfloor}.
\end{equation*}
\end{thm}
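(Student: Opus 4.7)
The plan is a double induction: an outer induction on $n$ whose inductive step rests on a pairing identity, together with an inner induction on $k$ to handle the base case $n=0$. The key auxiliary fact is the pairing identity
\begin{equation*}
\frac{1}{B_{2j}(t)B_{2j+1}(t)} + \frac{1}{B_{2j+1}(t)B_{2j+2}(t)} = \frac{1}{tB_{j}(t)B_{j+1}(t)},
\end{equation*}
which follows from the defining recurrences $B_{2j}=tB_{j}$, $B_{2j+2}=tB_{j+1}$ and $B_{2j+1}=B_{j}+B_{j+1}$ by clearing denominators.

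For the main inductive step $(k,n)\Rightarrow (k,n+1)$, I would use that the range $[k2^{n+1},\, k2^{n+2}]$ has odd cardinality with both endpoints even, so pairing consecutive terms while isolating the leftover final term yields
\begin{equation*}
\sum_{i=k2^{n+1}}^{k2^{n+2}} \frac{1}{B_{i}(t) B_{i+1}(t)} = \frac{1}{t}\sum_{j=k2^{n}}^{k2^{n+1}-1} \frac{1}{B_{j}(t) B_{j+1}(t)} + \frac{1}{B_{k2^{n+2}}(t)\, B_{k2^{n+2}+1}(t)}.
\end{equation*}
I would rewrite the $j$-sum as the full $(k,n)$-sum minus its last term $1/(B_{k2^{n+1}}B_{k2^{n+1}+1})$, apply the inductive hypothesis, and use $B_{k2^{m}}=t^{m}B_{k}$; the two occurrences of $1/B_{k2^{n+1}+1}$ then cancel and what remains rearranges exactly to the claimed right-hand side at $(k,n+1)$.

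For the base case $n=0$ I induct on $k$. The case $k=1$ is a direct check: both sides equal $(2+t)/(t(1+t))$, using $S_{1}(t)=0$ and $B_{3}(t)=1+t$. If $k=2m$ is even, the substitutions $B_{2m}=tB_{m}$, $S_{2m}=tS_{m}$ and $\lfloor\log 2m\rfloor=\lfloor\log m\rfloor+1$ show that the identity at $(2m,0)$ is equivalent to the identity at $(m,1)$; the latter follows from the identity at $(m,0)$ by the outer inductive step, and $(m,0)$ is available by the inner inductive hypothesis. If $k=2m+1$ is odd with $m\geq 1$, I apply the pairing identity to the range $[2m,\,4m+2]$ to obtain $\sum_{i=2m}^{4m+2} 1/(B_{i}B_{i+1}) = \frac{1}{t}\sum_{i=m}^{2m}1/(B_{i}B_{i+1}) + 1/(tB_{2m+1}B_{4m+3})$, then subtract off the single term $1/(B_{2m}B_{2m+1})=1/(tB_{m}B_{2m+1})$ to recover the $(2m+1,0)$-sum. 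Substituting the inner inductive formula at $(m,0)$, clearing the denominator $t^{\lfloor\log m\rfloor+2}B_{m}B_{2m+1}$ in the equation that the $(2m+1,0)$-sum equals its claimed value, and using $B_{2m+1}=B_{m}+B_{m+1}$ together with $S_{2m+1}=S_{m}+S_{m+1}+t^{\lfloor\log m\rfloor}$, the remaining difference collapses to
\begin{equation*}
-(2-t)\bigl(B_{m}S_{m+1}-S_{m}B_{m+1}\bigr)+t^{\lfloor\log m\rfloor}\bigl(B_{m+1}-B_{m}-t+1\bigr),
\end{equation*}
which vanishes by Lemma \ref{mainlemma} applied at $n=m$.

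The main obstacle will be the algebra in the odd-$k$ subcase of the base step: the cross-term $(2-t)S_{m+1}B_{m}$ produced after clearing denominators has no a priori cancellation, and it is exactly Lemma \ref{mainlemma} at $n=m$ that supplies the needed relation between $B_{m}S_{m+1}-S_{m}B_{m+1}$ and $t^{\lfloor\log m\rfloor}(B_{m+1}-B_{m}-t+1)$ to close the computation.
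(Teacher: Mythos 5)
Your proposal is correct and follows essentially the same route as the paper: the pairing identity is exactly the paper's Step 1 computation establishing the recurrence in $n$, and your inner induction on $k$ for the base case $n=0$ mirrors the paper's Step 2, with Lemma \ref{mainlemma} (invoked in the paper via Corollary \ref{mainid}) supplying precisely the cancellation you identify in the odd-$k$ subcase. The only cosmetic difference is that you derive the formula at the new index directly from the inductive hypothesis rather than showing that both sides satisfy the same recurrence.
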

\begin{proof}
Let $P_{k,n}$ (respectively $Q_{k,n}$) denote the left hand side
(respectively the right hand side) of the identity which we want to
prove. We prove the desired identity in two steps. First we prove
that $P_{k,n}$ and $Q_{k,n}$ satisfy the same linear recurrence
relation (with respect to $n$). In Step 2 we use similar reasoning
in order to prove that $P_{k,0}=Q_{k,0}$ for $k\in\N$. These two
facts tied together give the result.

Step 1. We find a recurrence relation satisfied by $P_{k,n}$. In
order to shorten the notation we put $B_{i}:=B_{i}(t)$. We have the
following chain of equalities
\begin{align*}
P&_{k,n+1}=\\
 &\sum_{i=k2^{n+1}}^{k2^{n+2}}\frac{1}{B_{i}B_{i+1}}=\sum_{i=k2^{n}}^{k2^{n+1}}\frac{1}{B_{2i}B_{2i+1}}+\sum_{i=k2^{n}}^{k2^{n+1}}\frac{1}{B_{2i+1}B_{2i+2}}-\frac{1}{B_{k2^{n+2}+1}B_{k2^{n+2}+2}}\\
 &=\sum_{i=k2^{n}}^{k2^{n+1}}\left(\frac{1}{tB_{i}(B_{i}+B_{i+1})}+\frac{1}{tB_{i+1}(B_{i}+B_{i+1})}\right)-\frac{1}{B_{k2^{n+2}+1}B_{k2^{n+2}+2}}\\
 &=\frac{1}{t}\sum_{i=k2^{n}}^{k2^{n+1}}\frac{B_{i}+B_{i+1}}{B_{i}B_{i+1}(B_{i}+B_{i+1})}-\frac{1}{B_{k2^{n+2}+1}B_{k2^{n+2}+2}}\\
 &=\frac{1}{t}P_{k,n}-\frac{1}{B_{k2^{n+2}+1}B_{k2^{n+2}+2}}=\frac{1}{t}P_{k,n}-\frac{1}{tB_{k2^{n+1}+1}B_{k2^{n+2}+1}}
\end{align*}
We thus find that $P_{k,n+1}=t^{-1}P_{k,n}-1/tB_{k2^{n+1}+1}B_{k2^{n+2}+1}$. We show that $Q_{k,n}$ satisfies exactly the same recurrence relation. First of all we note that from the definition of $Q_{k,n}$ we have
\begin{equation*}
 Q_{k,n+1}=\frac{1}{t}Q_{k,n}+\frac{1}{t^{n+2}B_{k}}\left(\frac{1}{B_{k2^{n+2}+1}}-\frac{1}{B_{k2^{n+1}+1}}\right).
\end{equation*}
Using the recurrence relation satisfied by $B_{n}$ we easily get that the expression in the bracket is equal to
\begin{equation*}
 \frac{B_{k2^{n+1}+1}-B_{k2^{n+2}+1}}{B_{k2^{n+1}+1}B_{k2^{n+2}+1}}=\frac{B_{k2^{n+1}+1}-(B_{k2^{n+1}+1}+B_{k2^{n+1}})}{B_{k2^{n+1}+1}B_{k2^{n+2}+1}}=-\frac{t^{n+1}B_{k}}{B_{k2^{n+1}+1}B_{k2^{n+2}+1}}.
\end{equation*}
Putting this into the expression for $Q_{k,n+1}$ we easily get that $Q_{k,n}$ satisfies the same recurrence relation as $P_{k,n}$.

Step 2. We prove that $P_{k,0}=Q_{k,0}$. In order to do this we use
similar reasoning as in Step 1. We find a recurrence relation for
$P_{k}:=P_{k,0}$ and show that $Q_{k}:=Q_{k,0}$ satisfies the same
relation. We start with finding a recurrence relation for $P_{k}$.
We note that $P_{2k,n}=P_{k,n+1}$ and thus $P_{2k}=P_{2k,0}=P_{k,1}$
and using the recurrence relation from Step 1 we easily deduce that
\begin{equation*}
P_{2k}=\frac{1}{t}P_{k}-\frac{1}{tB_{2k+1}B_{4k+1}}.
\end{equation*}
Using now the expression for $P_{2k}$ we get
\begin{align*}
P_{2k+1}&=P_{2k}+\frac{1}{tB_{2k+1}}\left(\frac{1}{B_{4k+1}}
+\frac{1}{B_{4k+3}}-\frac{1}{B_{k}}\right)\\
        &=\frac{1}{t}P_{k}-\frac{1}{tB_{2k+1}B_{4k+1}}+\frac{1}{tB_{2k+1}}\left(\frac{1}{B_{4k+1}}
+\frac{1}{B_{4k+3}}-\frac{1}{B_{k}}\right).
\end{align*}
Using now the recurrence relation for defining the sequence
$\{B_{n}\}_{n=0}^{\infty}$ and performing simple but tiresome
calculations we arrive at the expression
\begin{equation*}
P_{2k+1}=\frac{1}{t}P_{k}-\frac{(t+1)B_{k+1}}{tB_{k}B_{2k+1}B_{4k+3}}.
\end{equation*}

We consider now the sequence $Q_{k}$. Using exactly the same
reasoning as in the case of $P_{2k}$ we deduce that
\begin{equation*}
Q_{2k}=\frac{1}{t}Q_{k}-\frac{1}{tB_{2k+1}B_{4k+1}}.
\end{equation*}

We prove that $Q_{2k+1}$ satisfies the same relation as $P_{2k+1}$.
Using now the result from Corollary \ref{impid2} and the identity
$\lfloor \log_{2}(k+1) \rfloor=\lfloor \log_{2}k \rfloor +1$ we get
that
\begin{align*}
Q_{2k+1}&=\frac{2-t}{t^{\lfloor
\log_{2}(2k+1)\rfloor+1}}\frac{S_{2k+1}}{B_{2k+1}}+\frac{1}{tB_{2k+1}}\left(\frac{1}{B_{4k+3}}+1\right)\\
        &=\frac{1}{t}\left(\frac{2-t}{t^{\lfloor
\log_{2}k\rfloor+1}}\frac{S_{k}}{B_{k}}+\frac{B_{k+1}-(t-1)(B_{k}+1)}{tB_{k}B_{2k+1}}\right)+\frac{1}{tB_{2k+1}}\left(\frac{1}{B_{4k+3}}+1\right)
\end{align*}
We note that
\begin{equation*}
\frac{1}{t}\frac{2-t}{t^{\lfloor
\log_{2}k\rfloor+1}}\frac{S_{k}}{B_{k}}=\frac{1}{t}Q_{k}-\frac{1}{t^2B_{k}}\left(\frac{1}{B_{2k+1}}+1\right).
\end{equation*}
Using this expression we get that
\begin{align*}
&Q_{2k+1}\\
&=\frac{1}{t}Q_{k}-\frac{1}{t^2B_{k}}\left(\frac{1}{B_{2k+1}}+1\right)+\frac{B_{k+1}-(t-1)(B_{k}+1)}{t^2B_{k}B_{2k+1}}+\frac{1}{tB_{2k+1}}\left(\frac{1}{B_{4k+3}}+1\right)
\end{align*}
We are thus left with the simplification of the complicated
expression in the above identity. One can easily see that the common
denominator, say $D$, of the sum of fractions which arises on the
left side in the expression for $Q_{2k+1}$ is clearly
$D=t^2B_{k}B_{2k+1}B_{4k+3}$. The numerator, say $N$, after
simplifications is equal to
\begin{equation*}
N=tB_{k}-(t-B_{k}-B_{k+1}+B_{2k+1})B_{4k+3}=t(B_{k}-B_{4k+3})=-t(t+1)B_{k+1},
\end{equation*}
where in the last equality we use the identity
$B_{4k+3}=B_{k}+(t+1)B_{k+1}$. We thus obtain the relation
\begin{equation*}
Q_{2k+1}=\frac{1}{t}Q_{k}-\frac{(t+1)B_{k+1}}{tB_{k}B_{2k+1}B_{4k+3}},
\end{equation*}
which is exactly the same relation satisfied by $P_{k}$. Gathering
the information we have obtained we see that $P_{k}$ and $Q_{k}$
satisfy the same recurrence relation.

In order to finish the proof of our theorem we note that
$P_{1,0}=(t+2)/t(t+1)=Q_{1,0}$ and because $P_{k}$ and $Q_{k}$
satisfy the same recurrence relation with the same initial condition
we deduce that $P_{k,0}=P_{k}=Q_{k}=Q_{k,0}$ for all $k\in\N_{+}$.
Using now the information from Step 1 we get that $P_{k,n}=Q_{k,n}$
for all $k\in\N_{+}$ and $n\in\N$. This observation finishes the
proof of our theorem.
\end{proof}

\section{Some additional observations, open questions and conjectures}\label{section6}

We start this section with a very simple result on the partial sums
of the $\pm1$ sequence $\{(-1)^{e(i)}\}_{i=1}^{\infty}$.

\begin{thm}
Let $e(n)=\op{deg}B_{n}(t)$ and let us consider the sequence
$S(n)=\sum_{i=1}^{n}(-1)^{e(i)}$. Then we have
\begin{equation*}
\liminf_{n\rightarrow+\infty}S(n)=-\infty,\quad\quad
\limsup_{n\rightarrow+\infty}S(n)=\infty,
\end{equation*}
\end{thm}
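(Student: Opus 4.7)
\medskip

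\noindent\emph{Proof strategy.} The plan is to restrict to the subsequence $n=2^{k}$ and exploit the polynomials $H_{k}(x)=\sum_{i=1}^{2^{k}}x^{e(i)}$ introduced in Section~\ref{section2}. The starting observation is that
\begin{equation*}
S(2^{k})=\sum_{i=1}^{2^{k}}(-1)^{e(i)}=H_{k}(-1),
\end{equation*}
so it suffices to control the single-variable sequence $h_{k}:=H_{k}(-1)$. Specializing the recurrence $H_{k+2}(x)=xH_{k+1}(x)+2xH_{k}(x)-x^{k+1}+1$ (Lemma in Section~\ref{section2}) at $x=-1$ yields the inhomogeneous linear recurrence
\begin{equation*}
h_{k+2}+h_{k+1}+2h_{k}=1+(-1)^{k},\qquad h_{0}=1,\ h_{1}=0.
\end{equation*}

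The next step is to solve this recurrence in closed form. A particular solution $h^{p}_{k}=\tfrac{1}{4}+\tfrac{1}{2}(-1)^{k}$ is read off by inspection. The characteristic polynomial $X^{2}+X+2$ of the homogeneous part has complex conjugate roots $r_{\pm}=(-1\pm i\sqrt{7})/2$ of modulus $\sqrt{2}$ and argument $\theta\in(\pi/2,\pi)$ with $\cos\theta=-1/(2\sqrt{2})$. Hence
\begin{equation*}
h_{k}=2|\alpha|(\sqrt{2})^{k}\cos(k\theta+\phi)+\tfrac{1}{4}+\tfrac{1}{2}(-1)^{k},
\end{equation*}
where the constants $\alpha\in\C$ and $\phi\in\R$ are pinned down by $h_{0},h_{1}$. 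A short calculation shows $|\alpha|^{2}=1/28$, so in particular $\alpha\neq 0$ and the oscillatory term has amplitude of exact order $(\sqrt{2})^{k}$.

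The only delicate step, and the one I expect to be the main obstacle, is verifying that the cosine factor does not asymptotically conspire to kill the growing amplitude. For this I would invoke Niven's theorem: the only rational values of $\cos(r\pi)$ for $r\in\Q$ are $0,\pm\tfrac{1}{2},\pm 1$, and since $-1/(2\sqrt{2})$ is not among these, $\theta/\pi$ must be irrational. By Weyl's equidistribution theorem the sequence $\{k\theta+\phi\bmod 2\pi\}$ is then equidistributed on $[0,2\pi)$, so there are infinitely many $k$ with $\cos(k\theta+\phi)\geq\tfrac{1}{2}$ and infinitely many with $\cos(k\theta+\phi)\leq-\tfrac{1}{2}$. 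Because $(\sqrt{2})^{k}\to\infty$ swamps the bounded particular solution, this forces $\limsup_{k\to\infty}h_{k}=+\infty$ and $\liminf_{k\to\infty}h_{k}=-\infty$; the corresponding statements for the full sequence $\{S(n)\}_{n\in\N}$ follow at once since $\{S(2^{k})\}$ is a subsequence. Everything outside the irrationality step is a routine consequence of the closed form for second-order linear recurrences.
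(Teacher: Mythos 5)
Your proposal follows the paper's own route exactly up to the closed form: restricting to $n=2^{k}$, identifying $S(2^{k})=H_{k}(-1)$, specializing the recurrence for $H_{n}$ at $x=-1$, and solving to get $h_{k}=\frac{1}{4}+\frac{1}{2}(-1)^{k}+2|\alpha|(\sqrt{2})^{k}\cos(k\theta+\phi)$ with $|\alpha|^{2}=1/28$ --- all of this matches the paper, which writes the oscillating part as $\epsilon_{1}p_{1}^{k}+\epsilon_{2}p_{2}^{k}$ with $p_{1,2}=(-1\pm\sqrt{-7})/2$ and $|\epsilon_{1}|^{2}=1/28$. The two arguments part ways only at the endgame. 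The paper stays elementary: it sets $t_{k}:=4\left(h_{k}-\frac{1}{4}-\frac{(-1)^{k}}{2}\right)\in\Z$, notes $t_{k+2}=-t_{k+1}-2t_{k}$, observes that no three consecutive terms can share a sign, and combines this with $|t_{k}|\to\infty$. You instead prove that $\theta/\pi$ is irrational and invoke Weyl equidistribution. Your route is legitimate and in one respect more candid: it isolates exactly the point (the cosine factor not conspiring to vanish) that the paper's appeal to ``the theory of linear difference equations'' quietly glosses over.

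That said, the step you yourself flagged as delicate is where your write-up has a genuine gap. Niven's theorem, as you state it, says: if $r\in\Q$ and $\cos(r\pi)\in\Q$, then $\cos(r\pi)\in\{0,\pm\frac{1}{2},\pm 1\}$. Since your value $\cos\theta=-1/(2\sqrt{2})$ is \emph{irrational}, the theorem's hypothesis fails and it yields no conclusion; it does not exclude $\theta$ being a rational multiple of $\pi$ with irrational cosine (compare $\cos(\pi/4)=\sqrt{2}/2$). The irrationality of $\theta/\pi$ is nevertheless true, and the standard repair is the algebraic-integer form of the argument: if $\theta=r\pi$ with $r\in\Q$, then $2\cos\theta=\zeta+\zeta^{-1}$ for a root of unity $\zeta$ and hence is an algebraic integer, whereas $2\cos\theta=-\sqrt{2}/2$ satisfies $x^{2}=\frac{1}{2}$ and is not an algebraic integer. (Equivalently, $p_{1}/\overline{p_{1}}$ is not a root of unity, because $p_{1}$ and $\overline{p_{1}}$ are non-associate primes above $2$ in the ring of integers of $\Q(\sqrt{-7})$, so $p_{1}^{m}=\overline{p_{1}}^{\,m}$ is impossible.) With that substitution your proof is complete; the rest (nonvanishing of $\alpha$, equidistribution giving infinitely many $k$ with $\cos(k\theta+\phi)\geq\frac{1}{2}$ and infinitely many with $\cos(k\theta+\phi)\leq-\frac{1}{2}$, and passage from the subsequence $S(2^{k})$ to $S(n)$) is sound.
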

\begin{proof}
In order to get the demanded equalities we use the value of
$s_{n}:=S(2^{n})$ which is just $H_{n}(-1)$, where $H_{n}$ is the
polynomial considered in Section \ref{section2}. We have
$s_{0}=1,\;s_{1}=0$ and for $n\geq 2$ we see that the sequence
$\{s_{n}\}_{n=0}^{\infty}$ satisfies the recurrence relation
$s_{n+2}=-s_{n+1}-2s_{n}+1+(-1)^{n}$. Although we have obtained a
closed expression for $H_{n}(x)$ in Corollary \ref{genforH(n)} it is
of little use in our situation. Instead of using it we just solve
the recurrence relation for $s_{n}$. We have
\begin{equation*}
s_{n}=\frac{1}{4}+\frac{(-1)^{n}}{2}+\epsilon_{1}p_{1}^{n}+\epsilon_{2}p_{2}^{n},
\end{equation*}
where $p_{1}=(-1+\sqrt{-7})/2,\;p_{2}=-(1+\sqrt{-7})/2$ and
$\epsilon_{1}=(7-3\sqrt{-7})/56,\;\epsilon_{2}=(7+3\sqrt{-7})/56$.
We introduce a sequence $\{t_{n}\}_{n=0}^{\infty}\subset\Z$ defined
in the following way
\begin{equation*}
t_{n}:=4\left(s_{n}-\frac{1}{4}-\frac{(-1)^{n}}{2}\right).
\end{equation*}
The sequence $t_{n}$ starts as follows
\begin{equation*}
1, 1, -3, 1, 5, -7, -3, 17, -11, -23, 45, 1, -91, 89, 93,
-271,\ldots .
\end{equation*}
One can easily check that $t_{n+2}=-t_{n+1}-2t_{n}$. Because
$|p_{1}|=|p_{2}|=\sqrt{2}>1$, thus from the theory of linear
difference equations we know that
$\lim_{n\rightarrow+\infty}|t_{n}|=+\infty$ and clearly the same
property holds for the sequence $s_{n}$. From the shape of the
recurrence relation for $t_{n}$ we deduce that $t_{n}$ changes sign
infinitely often (in fact there is no $n\in\N$ such that $t_{n},
t_{n+1},t_{n+2}$ are of the same sign). From the equality
$\lim_{n\rightarrow+\infty}|t_{n}|=+\infty$ and the mentioned
property of signs we immediately get that
\begin{equation*}
\liminf_{n\rightarrow+\infty}t_{n}=-\infty,\quad\quad
\limsup_{n\rightarrow+\infty}t_{n}=\infty,
\end{equation*}
and clearly the same property holds for $s_{n}$.
\end{proof}

In the next result we find the functional equation satisfied by the
ordinary generating function of the sequence
$\{e(n)\}_{n=1}^{\infty}$.

\begin{thm}\label{functeqfore}
Let $\cal{E}_{1}(x)\in\Z[[x]]$ be an ordinary generating function
for the sequence $\{e(n)\}$, i. e.
$\cal{E}_{1}(x)=\sum_{n=1}^{\infty}e(n)x^{n}$. Then $\cal{E}_{1}(x)$
satisfies the following functional equation:
\begin{equation*}
\cal{E}_{1}(x)=\cal{E}_{1}(x^2)+\frac{x^2+1}{x}\cal{E}_{1}(x^4)+\frac{x^2}{1-x}.
\end{equation*}
\end{thm}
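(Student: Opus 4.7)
The plan is to decompose $\cal{E}_{1}(x)$ by residue class of the index modulo $4$, applying the alternative recurrence $e(2n)=e(n)+1$, $e(4n+1)=e(n)+1$, $e(4n+3)=e(n+1)+1$ recalled in Section \ref{section2}. Noting that $e(1)=0$ contributes nothing, I write
\begin{equation*}
\cal{E}_{1}(x)=\sum_{m=1}^{\infty}e(2m)x^{2m}+\sum_{m=1}^{\infty}e(4m+1)x^{4m+1}+\sum_{m=0}^{\infty}e(4m+3)x^{4m+3}.
\end{equation*}
The recurrences hold for all $m\geq 1$ in the first two sums (the $m=0$ terms of $\{4m+1\}$ give the already-handled $n=1$), and the third sum will be reindexed via $k=m+1$ so that $e(4m+3)=e(k)+1$ appears summed over $k\geq 1$.

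The next step is to evaluate each piece. The even part gives $\sum_{m\geq 1}(e(m)+1)x^{2m}=\cal{E}_{1}(x^{2})+\frac{x^{2}}{1-x^{2}}$. The $4m+1$ part gives $x\sum_{m\geq 1}(e(m)+1)x^{4m}=x\cal{E}_{1}(x^{4})+\frac{x^{5}}{1-x^{4}}$. After the shift $k=m+1$, the $4m+3$ part becomes $\sum_{k\geq 1}(e(k)+1)x^{4k-1}=\frac{1}{x}\cal{E}_{1}(x^{4})+\frac{x^{3}}{1-x^{4}}$. Adding these produces the combined $\mathcal{E}_1(x^4)$ coefficient $x+\frac{1}{x}=\frac{x^{2}+1}{x}$, exactly as desired.

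Finally, I combine the three rational remainders:
\begin{equation*}
\frac{x^{2}}{1-x^{2}}+\frac{x^{5}+x^{3}}{1-x^{4}}=\frac{x^{2}}{1-x^{2}}+\frac{x^{3}(x^{2}+1)}{(1-x^{2})(1+x^{2})}=\frac{x^{2}+x^{3}}{1-x^{2}}=\frac{x^{2}}{1-x},
\end{equation*}
which matches the inhomogeneous term in the theorem. No real obstacle is expected; the only care required is the index shift in the $4m+3$ sum (ensuring $e(0)$ is never invoked, since $B_{0}(t)=0$) and the telescoping of $1+x^{2}$ with $1-x^{4}$ in the final simplification. Putting the three pieces together yields the claimed functional equation.
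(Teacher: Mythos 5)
Your proof is correct and follows essentially the same route as the paper: decompose $\mathcal{E}_{1}(x)$ over the residue classes $2n$, $4n+1$, $4n+3$, apply the alternative recurrence for $e(n)$, and combine the resulting geometric-series remainders into $\frac{x^{2}}{1-x}$. Your version is in fact slightly more careful than the paper's about the index ranges and the final rational simplification, but there is no substantive difference.
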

\begin{proof}
This is an easy consequence of the recurrence relation for the
sequence $e(n)$. Indeed, we have the following chain of equalities
\begin{align*}
\cal{E}_{1}(x)&=\sum_{n=1}^{\infty}e(n)x^{n}=\sum_{n=1}^{\infty}e(2n)x^{2n}+\sum_{n=0}^{\infty}e(4n+1)x^{4n+1}+\sum_{n=0}^{\infty}e(4n+3)x^{4n+3}\\
              &=\sum_{n=1}^{\infty}(e(n)+1)x^{2n}+x\sum_{n=1}^{\infty}(e(n)+1)x^{4n}+\frac{1}{x}\sum_{n=1}^{\infty}(e(n)+1)x^{4n}\\
              &=\cal{E}_{1}(x^2)+\frac{x^2+1}{x}\cal{E}_{1}(x^4)+\frac{x^2}{1-x}.
\end{align*}
Note that in the second last equality we use the recurrence relation
for $e(n)$. The result follows.
\end{proof}

Before we give the next result let us recall two results obtained in
\cite{Bec}.

\begin{prop}[Corollary 1 and Lemma 5 from \cite{Bec}]
(1) Let $\mathbb{F}$ be a field and let $f(x)\in\mathbb{F}[[x]]$ be
a power series satisfying the equation
\begin{equation*}
\sum_{i=0}^{m}a_{i}(x)f(x^{k^{i}})=0,
\end{equation*}
where $a_{0}(x),\ldots,a_{m}(x)\in\mathbb{F}(x)$ and $a_{0}(x)\equiv
1$. Suppose that there exists a rational function
$r(x)\in\mathbb{F}(x)\setminus\{0\}$ whose poles (in the algebraic
closure of $\mathbb{F}$) are either zero or roots of unity. If
\begin{equation*}
\frac{r(x^{k^{i}})}{r(x)}a_{i}(x)\in\mathbb{F}[x]
\end{equation*}
for $i=1,2,\ldots,m$, then $f(x)$ is a $k$-regular power series.

(2) If $f\in\bar{\Q}[[x]]$ be a $k$-regular power series. Then it is
either a rational function or it is transcendental over $\Q(x)$.
\end{prop}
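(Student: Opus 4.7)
Since this proposition is presented as a direct quotation of Corollary 1 and Lemma 5 of Becker's paper \cite{Bec}, the honest action would be simply to refer the reader to the original source. Nevertheless, I will describe how I would think about reproving each part, since both statements admit natural proof strategies from the general theory of Mahler functional equations.

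For part (1), my plan is to use the characterization of $k$-regular series via finite-dimensionality of the $k$-kernel (the $\F$-span of all subsequences obtained by iterated decimation modulo powers of $k$). Starting from $f(x)=-\sum_{i=1}^{m}a_i(x)f(x^{k^i})$, I would multiply through by $r(x)$ and exploit the hypothesis $\frac{r(x^{k^i})}{r(x)}a_i(x)\in\F[x]$ to rewrite the relation as a \emph{polynomial} Mahler-type equation for the auxiliary series $g(x):=r(x)f(x)$. The role of the restriction that the poles of $r$ lie only at $0$ or at roots of unity is precisely to guarantee that the substitutions $x\mapsto x^{k^i}$ do not create new singularities in the gauge — those points are stable under $x\mapsto x^k$. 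From the polynomial Mahler equation for $g$, finite-dimensionality of the $k$-kernel of $g$ (hence of $f$, since the change of gauge is by a rational function) would follow from a standard module argument: each iterate $g(x^{k^j})$ can be expressed as a polynomial combination of $g, g(x^k),\ldots, g(x^{k^{m-1}})$, bounding the dimension. The main obstacle is the careful bookkeeping of pole behaviour under the iterated substitutions, which is exactly what the hypothesis on $r$ controls.

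For part (2), the plan is to appeal to the established Mahler-type transcendence theory. A $k$-regular series satisfies a linear Mahler equation over $\F(x)$, and the classification of algebraic solutions of such equations is classical: by results going back to Mahler and Cobham, refined through the work of Nishioka and others, an algebraic power series satisfying a Mahler-type functional equation of the form in part (1) must already be rational. Thus the dichotomy "rational or transcendental" for algebraic $k$-regular series reduces to this classification. The hard part here would be reproducing the full Mahler–Nishioka transcendence machinery from scratch; this is the substance of Becker's paper and should simply be cited rather than rederived.
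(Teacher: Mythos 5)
The paper itself gives no proof of this proposition: it is imported verbatim as Corollary 1 and Lemma 5 of Becker's paper \cite{Bec} and used as a black box in the proof of Corollary \ref{transcofgenfore}. Your decision to defer to the original source is therefore exactly what the paper does, and there is no in-paper argument to compare your sketch against. For what it is worth, your outline of part (1) --- gauging by $r(x)$ so that the hypothesis $\frac{r(x^{k^{i}})}{r(x)}a_{i}(x)\in\mathbb{F}[x]$ turns the relation into a polynomial Mahler equation for $g(x)=r(x)f(x)$, with the restriction on the poles of $r$ ensuring stability under $x\mapsto x^{k}$, and then bounding the $k$-kernel --- is a faithful description of Becker's strategy, and your reduction of part (2) to the Mahler--Nishioka classification of algebraic solutions of Mahler equations is likewise the standard route. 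Neither sketch is a complete proof, but completeness is not expected here: the result is cited, not proved, in the paper under review.
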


We use the cited result in order to prove the following.

\begin{cor}\label{transcofgenfore}
The function $\cal{E}_{1}(x)$ is 2-regular power series and it is
transcendental over $\Q(x)$.
\end{cor}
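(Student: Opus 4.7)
The plan is to apply in sequence the two parts of the Becker proposition cited just above the statement. For part~(1), namely $2$-regularity, I will produce a homogeneous Mahler-type functional equation with base $k=2$ satisfying the hypotheses of Becker's Corollary~1. For part~(2), transcendence, I will apply his Lemma~5, which reduces matters to checking that $\cal{E}_{1}$ is not a rational function.

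First I would homogenize the functional equation from Theorem~\ref{functeqfore}. Writing $f(x):=\cal{E}_{1}(x)$, that theorem reads
\[
f(x)-f(x^{2})-\frac{x^{2}+1}{x}\,f(x^{4})=\frac{x^{2}}{1-x}.
\]
Substituting $x\mapsto x^{2}$ produces an identity with inhomogeneous term $x^{4}/(1-x^{2})$; multiplying that identity through by $(1+x)/x^{2}$ rescales its right-hand side to $x^{2}/(1-x)$ as well. Subtracting eliminates the inhomogeneous term and leaves a homogeneous equation
\[
f(x)+a_{1}(x)f(x^{2})+a_{2}(x)f(x^{4})+a_{3}(x)f(x^{8})=0,
\]
in which each $a_{i}\in\Q(x)$ has its only pole at $x=0$, with pole orders $2,2,4$ respectively.

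To apply Becker's Corollary~1 I would then take the weight $r(x)=x^{2}$, whose only pole is at $0$ (an allowed location). For each $i=1,2,3$ the factor $r(x^{2^{i}})/r(x)=x^{2(2^{i}-1)}$, namely $x^{2}, x^{6}, x^{14}$, vanishes to sufficiently high order at the origin to clear the pole of $a_{i}$, so $\frac{r(x^{2^{i}})}{r(x)}a_{i}(x)\in\Q[x]$; hence $\cal{E}_{1}(x)$ is $2$-regular. For the final step, by Lemma~5 it remains only to rule out rationality. If $\cal{E}_{1}\in\Q(x)$, partial fractions would force $e(n)=\sum_{j}p_{j}(n)\alpha_{j}^{n}$ with algebraic $\alpha_{j}$ and polynomial $p_{j}$. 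But Theorem~\ref{citedthm} yields $e(n)\leq 1+\log_{2}n$, which precludes $|\alpha_{j}|>1$ and, when $|\alpha_{j}|=1$, forces $p_{j}$ to be constant; the sequence $e(n)$ would then be bounded, contradicting $e(2^{k})=k$.

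The only genuinely delicate step I anticipate is the homogenization: one must iterate the Theorem~\ref{functeqfore} equation carefully and verify that $r(x)=x^{2}$ simultaneously clears all three denominators produced. Once this is in place, both the $2$-regularity and the transcendence follow by direct appeal to the cited results of Becker together with the elementary growth bound for $e(n)$.
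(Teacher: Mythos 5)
Your argument is correct, and its first half coincides with the paper's own: the homogenized four-term relation you obtain by substituting $x\mapsto x^{2}$ into the equation of Theorem~\ref{functeqfore}, rescaling by $(1+x)/x^{2}$ and subtracting is exactly the identity the paper writes down, and your weight $r(x)=x^{2}$ clears the poles of the coefficients just as the paper's choice $r(x)=x^{4}$ does (in both cases $r(x^{2^{i}})a_{i}(x)/r(x)$ is a polynomial for $i=1,2,3$), so the appeal to Becker's Corollary~1 is the same. Where you genuinely diverge is in ruling out rationality. The paper reduces $\cal{E}_{1}$ modulo $2$, uses the Frobenius identity $\cal{E}(x^{2^{i}})=\cal{E}(x)^{2^{i}}$ over $\F_{2}$ to convert the Mahler equation into an honest algebraic equation $F(x,T)=0$ of degree $4$, and checks that this quartic has no root in $\F_{2}(x)$. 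You instead argue from coefficient asymptotics: rationality would give $e(n)=\sum_{j}p_{j}(n)\alpha_{j}^{n}$ for large $n$, which is incompatible with the bound $e(n)\le 1+\log_{2}n$ supplied by Theorem~\ref{citedthm} and the unboundedness $e(2^{k})=k$. Your route avoids the finite-field computation and is arguably more transparent; its only cost is that the two asymptotic facts you invoke without proof --- that terms with $|\alpha_{j}|>1$ cannot cancel down to $O(\log n)$, and that a nonconstant $p_{j}$ attached to a unimodular $\alpha_{j}$ forces growth $\gg n$ along a subsequence --- while standard (the first via $\limsup_{n}|e(n)|^{1/n}=\max_{j}|\alpha_{j}|$ for a rational generating function in lowest terms, the second via the mean-square value of a unimodular exponential sum), do deserve a sentence of justification. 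With that caveat, both proofs are complete and valid.
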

\begin{proof}
First of all we rewrite the functional equation for $\cal{E}_{1}(x)$
in the following form
\begin{equation*}
\cal{E}_{1}(x)-\frac{x^2+x+1}{x^2}\cal{E}_{1}(x^2)-\frac{x^3-1}{x^2}\cal{E}_{1}(x^4)+\frac{(1+x)(1+x^4)}{x^4}\cal{E}_{1}(x^8)=0.
\end{equation*}
The displayed identity follows from the functional equation for
$\cal{E}_{1}$ with substitution $x^2$ instead of $x$ and uses the
fact that $\frac{x^{4}}{1-x^2}=\frac{x^{2}}{1+x}\frac{x^2}{1-x}$.
From the result of Becker we deduce that $\cal{E}_{1}$ is 2-regular
by taking $r(x)=x^{4}$.

In order to prove transcendence of $\cal{E}_{1}(x)$ it is enough to
show that $\cal{E}_{1}(x)$ is not a rational function. So let us
suppose that $\cal{E}_{1}(x)=p(x)/q(x)$ with $p,q\in\Z[x]$ and
$\gcd(p(x),q(x))=1$. From this we deduce that the power series
$\cal{E}(x):=\cal{E}_{1}(x)\pmod{2}$ is rational over
$\mathbb{F}_{2}$ (a field with two elements). But then
$\cal{E}(x^{2^{i}})=\cal{E}_{1}(x)^{2^{i}}$ and thus we get that
$\cal{E}(x)$ satisfies an algebraic equation
\begin{equation}\label{eqforT}
F(x,T)=(1-x)(1+x^2)T^4+x(1-x)T^2-x(1-x)T+x^2=0.
\end{equation}
Because $\cal{E}$ is rational we know that the equation $F(x,T)=0$
has a rational root. But one can easily check that there is no
rational function over $\mathbb{F}_{2}$ for which (\ref{eqforT})
holds. This contradiction finishes the proof of the transcendence of
$\cal{E}_{1}(x)$.
\end{proof}

Using similar reasoning as in the proof of Theorem \ref{sumofi} and
Proposition \ref{functeqfore} one can easily deduce the following.

\begin{thm}
Let $k$ be a nonnegative integer and let us define the function
$\cal{E}_{k}(x)=\sum_{n=1}^{\infty}e(n)^{k}x^{n}$. Then
$\cal{E}_{0}(x)=\frac{x}{1-x}$ and for $k\geq 1$ the function
$\cal{E}_{k}(x)$ satisfies the functional equation
\begin{equation*}
\cal{E}_{k}(x)-\cal{E}_{k}(x^2)-\frac{x^2+1}{x}\cal{E}_{k}(x^4)=\sum_{j=0}^{k-1}C(k,j)\left(\cal{E}_{j}(x^2)+\frac{x^2+1}{x}\cal{E}_{j}(x^4)\right).
\end{equation*}
\end{thm}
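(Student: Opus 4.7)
The plan is to mimic the proof of Theorem \ref{functeqfore} (which is the $k=1$ case) while borrowing the binomial-expansion device from the proof of Theorem \ref{sumofi}. The $k=0$ claim is immediate: $e(n)^{0}=1$ gives $\cal{E}_{0}(x)=\sum_{n\geq 1}x^{n}=x/(1-x)$. For $k\geq 1$, the first step is to split
\begin{equation*}
\cal{E}_{k}(x)=\sum_{n=1}^{\infty}e(2n)^{k}x^{2n}+\sum_{n=0}^{\infty}e(4n+1)^{k}x^{4n+1}+\sum_{n=0}^{\infty}e(4n+3)^{k}x^{4n+3},
\end{equation*}
and then apply the recurrences $e(2n)=e(n)+1$, $e(4n+1)=e(n)+1$, $e(4n+3)=e(n+1)+1$ to each piece. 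After reindexing $m=n+1$ in the last sum, every summand takes the shape $(e(m)+1)^{k}$ times a power of $x$.

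The second step is to expand $(e(m)+1)^{k}=\sum_{j=0}^{k}C(k,j)e(m)^{j}$ by the binomial theorem and interchange the order of summation. The even-$n$ contribution becomes $\sum_{j=0}^{k}C(k,j)\cal{E}_{j}(x^{2})$, while the two odd-$n$ contributions combine (using $x+\frac{1}{x}=\frac{x^{2}+1}{x}$, exactly as in the derivation of Theorem \ref{functeqfore}) into $\frac{x^{2}+1}{x}\sum_{j=0}^{k}C(k,j)\cal{E}_{j}(x^{4})$. Isolating the $j=k$ terms on the right and transposing them to the left then produces the claimed functional equation.

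The only bookkeeping subtlety is the boundary term at $n=0$ of the $4n+1$ sum: strictly speaking $e(4n+1)=e(n)+1$ is valid only for $n\geq 1$, but since $e(1)=0$ the missing summand $e(1)^{k}x$ vanishes for $k\geq 1$, so the sum may be extended for free. I do not expect any genuine obstacle, since the argument is structurally parallel to the proof of Theorem \ref{sumofi}, with $x^{e(i)}$ replaced by $x^{i}$ and with $\cal{E}_{j}(x^{2})$ and $\cal{E}_{j}(x^{4})$ playing the role that $G_{j}(x)$ played there.
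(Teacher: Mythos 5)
Your proof is correct and follows exactly the route the paper intends: the paper gives no written-out argument for this theorem, only the remark that it follows by combining the reasoning of Theorem \ref{sumofi} (binomial expansion of $(e(n)+1)^{k}$ and isolation of the top term) with that of Theorem \ref{functeqfore} (the $2n$, $4n+1$, $4n+3$ splitting), which is precisely what you do. Your handling of the $n=0$ boundary term of the $4n+1$ sum is also the right (and only) point requiring care, and it checks out since $e(1)=0$.
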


We know that $\cal{E}_{0}(x)$ is rational and from Corollary
\ref{transcofgenfore} we know that the function $\cal{E}_{1}(x)$ is
transcendental. It is an interesting question whether the function
$\cal{E}_{k}(x)$ for $k\geq 2$ is transcendental too. We believe
that this is the case and it leads us to the following.

\begin{conj}
The function $\cal{E}_{k}(x)$ is transcendental for $k\geq 2$.
\end{conj}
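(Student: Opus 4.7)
The plan is to mimic the two-step template used for $\cal{E}_{1}(x)$ in Corollary \ref{transcofgenfore}: first establish that $\cal{E}_{k}(x)$ is a $2$-regular power series, and then show that $\cal{E}_{k}(x)$ cannot be a rational function. Combined, these two facts, together with part (2) of the proposition of Becker cited above, force $\cal{E}_{k}(x)$ to be transcendental over $\Q(x)$.

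For the $2$-regularity step the cleanest route is to observe that Corollary \ref{transcofgenfore} already gives us that the integer sequence $\{e(n)\}_{n=1}^{\infty}$ is $2$-regular. Since the class of $2$-regular integer sequences is closed under term-wise multiplication (a standard theorem of Allouche and Shallit), the sequence $\{e(n)^{k}\}_{n=1}^{\infty}$ is also $2$-regular for every $k\geq 1$, which is exactly the statement that $\cal{E}_{k}(x)$ is a $2$-regular power series. Alternatively, one can argue by induction on $k$ using the functional equation displayed in the theorem above: the right-hand side is a $\Q(x)$-combination of $\cal{E}_{j}(x^{2})$ and $\cal{E}_{j}(x^{4})$ for $j<k$ and hence is $2$-regular by the inductive hypothesis; one additional substitution $x\mapsto x^{2}$ then eliminates the inhomogeneous term and produces a homogeneous Mahler-type equation for $\cal{E}_{k}(x)$ to which Becker's criterion applies, exactly as in the case $k=1$.

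The non-rationality step rests on the elementary congruence $a^{k}\equiv a\pmod{2}$, valid for every integer $a$ and every $k\geq 1$. Applied coefficient-wise it yields
\begin{equation*}
\cal{E}_{k}(x)\equiv \cal{E}_{1}(x)\pmod{2}
\end{equation*}
in $\F_{2}[[x]]$. If $\cal{E}_{k}(x)$ were rational, say $\cal{E}_{k}(x)=p(x)/q(x)$ with $p,q\in\Z[x]$, then reducing modulo $2$ exactly as in the proof of Corollary \ref{transcofgenfore} would force $\cal{E}_{1}(x)\bmod 2$ to be rational over $\F_{2}$. This contradicts the conclusion established there that $\cal{E}_{1}(x)\bmod 2$ satisfies the equation $F(x,T)=0$, which admits no rational solution over $\F_{2}$.

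The main obstacle will be the $2$-regularity step. The Allouche--Shallit shortcut is short but requires appealing to a closure theorem external to this paper; the inductive Becker route is self-contained but demands some bookkeeping to convert the inhomogeneous functional equation into a homogeneous Mahler-type equation amenable to Becker's criterion. Once $2$-regularity is in hand, the mod $2$ congruence above closes the argument at once for every $k\geq 2$.
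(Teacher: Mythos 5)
First, a point of comparison: the paper does not prove this statement at all --- it is stated as an open conjecture, with the author only proving the case $k=1$ (Corollary \ref{transcofgenfore}) and the rationality of $\cal{E}_{0}$. So there is no ``paper's proof'' to match yours against; what you have written is a proposed resolution of the conjecture, and it appears to be essentially correct. The decisive idea, which the paper does not have, is the congruence $a^{k}\equiv a\pmod{2}$, giving $\cal{E}_{k}(x)\equiv\cal{E}_{1}(x)\pmod{2}$ coefficient-wise. This reduces non-rationality of $\cal{E}_{k}$ for every $k\geq 1$ to the single computation already carried out in Corollary \ref{transcofgenfore}, namely that $\cal{E}_{1}\bmod 2$ satisfies $F(x,T)=0$ and that $F$ has no rational root over $\F_{2}$. (To reduce a putative rational $\cal{E}_{k}=p/q$ modulo $2$ you should invoke Fatou's lemma to normalize $q(0)=1$ so that $q\bmod 2$ is a unit in $\F_{2}[[x]]$; the paper glosses over the same point for $k=1$, so this is a shared, easily repaired omission.) Combined with $2$-regularity and part (2) of Becker's result, transcendence follows.

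Two caveats on the $2$-regularity step. Your primary route --- closure of $2$-regular sequences under term-wise multiplication --- is a correct and standard theorem of Allouche and Shallit, but it is genuinely external to this paper and must be cited; with it, $e(n)$ $2$-regular implies $e(n)^{k}$ $2$-regular, and you are done. Your proposed ``self-contained'' alternative, however, does not go through as described: Becker's Corollary 1, as quoted in the paper, applies to a single power series satisfying a homogeneous Mahler equation with \emph{rational function} coefficients. For $k\geq 2$ the inhomogeneous term $\sum_{j<k}C(k,j)\bigl(\cal{E}_{j}(x^{2})+\frac{x^{2}+1}{x}\cal{E}_{j}(x^{4})\bigr)$ involves the transcendental functions $\cal{E}_{j}$ with $1\leq j<k$, so the substitution trick that eliminated the rational inhomogeneity in the $k=1$ case cannot produce an equation of the required form. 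If you want a route through functional equations you would need the closure of $2$-regular sequences under dilation $f(x)\mapsto f(x^{m})$ and under sums --- which again lands you in Allouche--Shallit territory. I would therefore drop the ``alternative'' sentence and commit to the ring-closure argument, which makes the whole proof correct and surprisingly short.
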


Finally, we state the following conjecture which appeared during our
investigations of the sequence of maximal coefficients of the Stern
polynomial $B_{n}(t)$.

\begin{conj}
Let $B_{n}(t)=\sum_{i=0}^{e(n)}a_{i,e(n)}x^{i}$ and let us define
$\cal{M}(n)=\op{max}\{a_{i,e(n)}:\;i=1,2,\ldots, e(n)\}$. Then the
following equality holds
\begin{equation*}
\op{max}\{\cal{M}(m):\;m\in
[2^{n-1},2^{n}]\}=\op{max}\left\{C(n,0),C(n-1,1),\ldots,C(n-k,k)\right\},
\end{equation*}
where $k$ is equal to $n/2$ if $n$ is even and $(n-1)/2$ for $n$
odd.
\end{conj}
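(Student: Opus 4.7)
\emph{Setup and reduction.} Write $a_{m}(i):=[t^{i}]B_{m}(t)$, so that $\cal{M}(m)=\op{max}_{i}a_{m}(i)$. The Stern recurrences give $a_{2m}(i)=a_{m}(i-1)$ and $a_{2m+1}(i)=a_{m}(i)+a_{m+1}(i)$; in particular $\cal{M}(2m)=\cal{M}(m)$, so $\op{max}\{\cal{M}(m):m\in[2^{n-1},2^{n}]\}$ is already attained at some odd $m$ in this range, and the whole problem is controlled by the ``odd step'' recursion $B_{2m+1}=B_{m}+B_{m+1}$.

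\emph{Lower bound via Jacobsthal numbers.} The computational data for $n=3,4,5$ show the extremum realised at $m=5,11,21$, namely the Jacobsthal numbers $J_{n+1}=(2^{n+1}-(-1)^{n+1})/3$, whose binary expansions are (almost) alternating $10101\ldots$. I would prove by induction on $n$, using the relation $J_{n+1}=2J_{n}+(-1)^{n}$ together with the Stern recurrences, that the coefficient sequence of $B_{J_{n+1}}(t)$ matches, up to a shift, the Fibonacci-binomial sequence $C(n,0),C(n-1,1),\ldots,C(n-\lfloor n/2\rfloor,\lfloor n/2\rfloor)$, thereby realising $\cal{M}(J_{n+1})=\op{max}_{k}C(n-k,k)$. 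The alternation in the binary expansion forces the odd/even Stern steps to alternate accordingly, producing a Pascal-type convolution that distributes the Fibonacci binomials across the coefficients.

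\emph{Upper bound and the main obstacle.} A naive per-coefficient bound $a_{m}(i)\le C(n-i,i)$ fails: for $n=4$ and $m=13$ one has $B_{13}(t)=1+2t+2t^{2}$, so $a_{13}(2)=2>C(2,2)=1$, even though $\cal{M}(13)=2$ still lies inside the envelope $\op{max}_{k}C(n-k,k)=3$. The induction must therefore be strengthened. My plan is to carry a joint inductive hypothesis on consecutive pairs $(B_{m},B_{m+1})$ for $m\in[2^{n-1},2^{n}]$, controlling the individual coefficients and the convolutions $a_{m}(i)+a_{m+1}(i)$ simultaneously. The Fibonacci-binomial identity $C(n-k,k)=C((n-1)-k,k)+C((n-2)-(k-1),k-1)$ matches precisely the two-step Stern recursion $a_{4k+1}(i)=a_{k}(i-1)+a_{k}(i)+a_{k+1}(i)$ (and its symmetric companion for $4k+3$), which suggests the correct invariant. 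The principal difficulty is pinning down the exact strengthening: it must be tight enough to recover $\op{max}_{k}C(n-k,k)$ as the upper envelope, yet flexible enough to persist across the recursion in the face of the variability of coefficient patterns over the dyadic window.

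\emph{An alternative combinatorial route.} As a fallback, I would use the hyperbinary interpretation of \cite{Kla}, according to which $a_{m}(i)$ counts hyperbinary representations of $m-1$ with exactly $i$ occurrences of $1$. The upper bound then reduces to showing that among all $N\in[2^{n-1}-1,2^{n}-1]$ and all $i\ge 0$, the number of hyperbinary representations of $N$ with $i$ ones never exceeds $\op{max}_{k}C(n-k,k)$. A direct bijection from such representations to restricted compositions or lattice paths counted by Fibonacci binomials may give the cleanest proof and sidestep the need for a delicate simultaneous induction.
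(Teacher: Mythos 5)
The statement you are asked to prove is stated in the paper as an open conjecture: the author explicitly says in the introduction and in Section 6 that it arose during the investigation and that he was unable to prove it. There is therefore no proof in the paper to compare against, and the only honest question is whether your proposal closes the gap. It does not. Your reduction to odd indices via $\mathcal{M}(2m)=\mathcal{M}(m)$ is correct, and your lower-bound observation is sound and checkable: for the Jacobsthal-type indices one finds $B_{5}=1+2t$, $B_{11}=1+3t+t^{2}$, $B_{21}=1+4t+3t^{2}$, whose coefficient lists are exactly $C(n,0),C(n-1,1),\ldots$, and an induction using $J_{n+1}=2J_{n}+(-1)^{n}$ together with $B_{2m+1}=B_{m}+B_{m+1}$ should indeed establish $\mathcal{M}(J_{n+1})=\max_{k}C(n-k,k)$, hence the inequality $\geq$. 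That half is a genuine contribution.

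The upper bound, however, is the entire difficulty, and you have not supplied it. You correctly identify that the naive per-coefficient bound $a_{m}(i)\leq C(n-i,i)$ fails (your example $B_{13}=1+2t+2t^{2}$ is right), and you then say the induction ``must be strengthened'' to a joint hypothesis on pairs $(B_{m},B_{m+1})$ --- but you never state the strengthened invariant, let alone verify that it propagates through the steps $m\mapsto 4m+1$ and $m\mapsto 4m+3$. That unstated invariant is precisely the missing idea; without it the argument is a plan, not a proof. The ``alternative combinatorial route'' does not help either: since $a_{m}(i)$ \emph{is} the number of hyperbinary representations of $m-1$ with $i$ ones, the claim that this count never exceeds $\max_{k}C(n-k,k)$ over the dyadic window is a verbatim restatement of the conjecture, not a reduction of it to something known. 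As it stands, the conjecture remains open, and your proposal should be presented as partial progress (the lower bound) plus a program for the upper bound.
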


\bigskip

\noindent Jagiellonian University, Institute of Mathematics,
{\L}ojasiewicza 6, 30-348 Krak\'ow, Poland; e-mail:\; {\tt
maciej.ulas@uj.edu.pl}

 \end{document}